\newcommand{\cL}{\mathcal{L}}
\newcommand{\cM}{\mathcal{M}}
\newcommand{\cO}{\mathcal{O}}
\newcommand{\cS}{\mathcal{S}}
\newcommand{\cW}{\mathcal{W}}
\newcommand{\m}{\to} \newcommand{\Q}{\mathbb{Q}}\newcommand{\R}{\mathbb{R}}\newcommand{\C}{\mathbb{C}}
\newcommand{\p}{Poincar\'e }
\newcommand{\bC}{\mathbb{C}}
\newcommand{\bN}{\mathbb{N}}
\newcommand{\bQ}{\mathbb{Q}}\newcommand{\bR}{\mathbb{R}}
\newcommand{\bZ}{\mathbb{Z}}
\newtheorem{theorem}{Theorem}[section]
\newtheorem{lemma}[theorem]{Lemma}
\newtheorem{proposition}[theorem]{Proposition}
\newtheorem{conjecture}[theorem]{Conjecture}
\theoremstyle{definition}
\newtheorem{definition}[theorem]{Definition}
\newtheorem{example}[theorem]{Example}
\newtheorem{remark}[theorem]{Remark}
\newcommand{\mr}[1]{{\rm #1}}
\newcommand{\fS}{\mathfrak{S}}
\newcommand{\syms}{S}
\newcommand{\symw}{W}
\newcommand{\stab}{t}
\newcommand{\OO}{\mathcal O}
\title{Homological stability for symmetric complements}
\author{Alexander Kupers}
\thanks{Alexander Kupers is supported by a William R. Hewlett Stanford Graduate Fellowship, Department of Mathematics, Stanford University, and was partially supported by NSF grant DMS-1105058.}
\author{Jeremy Miller}
\author{TriThang Tran}
\date{\today}
\begin{document}

\begin{abstract}Conjecture F from \cite{VW} states that the complements of closures of certain strata of the symmetric power of a smooth irreducible complex variety exhibit rational homological stability. We prove a generalization of this conjecture to the case of connected manifolds of dimension at least 2 and give an explicit homological stability range.\end{abstract}

\maketitle

\section{Introduction}

The goal of this paper is to prove a generalization of a conjecture of Vakil and Wood (Conjecture F of \cite{VW}). This conjecture concerns homological stability for certain subspaces of symmetric powers, defined as the complements of closures of certain strata. Here a sequence of spaces $X_k$ is said to have homological stability if the homology groups $H_i(X_k)$ are independent of $k$ for $k \gg i$.

We begin by defining the relevant subspaces of symmetric powers. Let $\mr{Sym}_k(M)$ denote the symmetric power $M^k/\fS_k$ of a space $M$. Here $\fS_k$ denotes the symmetric group on $k$ letters acting by permuting the terms. We think of this as the space of \emph{configurations} of \emph{particles} in $M$ with multiplicities summing to $k$. These are allowed to collide and then the multiplicities add up. To any such configuration in $\mr{Sym}_k(M)$ we can associate a way of writing the number $k$ as a sum of positive integers by recording the multiplicity of each particle. Such a sum is called a \textit{partition} of $k$. For example, to the element $\{m_1,m_2,m_2,m_3\} \in \mr{Sym}_4(M)$ with $m_1,m_2,m_3$ distinct we can associate the partition $4=1+1+2$. Using this one can define the following subspaces of $\mr{Sym}_k(M)$.

\begin{definition}\label{SDW} Let $\lambda$ be a partition of $k$. \begin{enumerate}[(i)] \item Let $S_\lambda(M)$ be the subspace of $\mr{Sym}_k(M)$ consisting of elements that have associated partition equal to $\lambda$. We call this the \textit{stratum corresponding to $\lambda$}.
\item Let $W_\lambda(M)$ be the complement of the closure of $S_\lambda(M)$ in $\mr{Sym}_k(M)$. We call this the \textit{symmetric complement associated to $\lambda$}.
\end{enumerate}
\end{definition}

An element of $S_{\lambda}(M)$ can be viewed as a configuration of distinct particles in $M$ labeled by $\lambda$. For  $\lambda=4+4+5$ for example, $S_{\lambda}(M)$ is the configuration space of $3$ distinct particles in $M$, two of which are labeled by the number $4$ and the remaining particle labeled by the number $5$. The two particles labeled by the number $4$ are indistinguishable from each other, but can be distinguished from the particle labeled by the number $5$. If $\lambda = 1+ \cdots +1$ is a partition of $k$, then $S_{\lambda}(M)$ is the space of configurations of $k$ distinct unordered particles in $M$, often denoted $C_k(M)$. In general $S_\lambda(M)$ is homeomorphic to one of the colored configuration spaces considered in \cite{Ch}.

One can think of $W_\lambda(M)$ as those elements of $\mr{Sym}_k(M)$ that cannot be  made to have associated partition $\lambda$ by an arbitrarily small perturbation. For example, an element of $\mr{Sym}_7(M)$ is in $W_\lambda(M)$ with $\lambda=1+1+1+2+2$ if all but possibly one of particles have multiplicity equal to one and no particles have multiplicity four or higher. Partitions of $k$ of the form $1+\cdots +1 +(c+1)$ yield the spaces homeomorphic to the bounded symmetric powers $\mr{Sym}^{\leq c}_k(M)$, defined as the subspace of $\mr{Sym}_k(M)$ where no particle has multiplicity greater than $c$. For $c=1$, this is simply $C_k(M)$ the configuration space of $k$ distinct unordered particles in $M$. 

Conjecture F pertains to the limiting behavior of symmetric complements as the number of particles increases. To state it we need the following construction. If $\lambda$ is a partition of $k$ we can obtain from it a partition of $k+1$ by adding another $1$ to $\lambda$. More generally we can add $j$ additional $1$'s to obtain a partition of $k+j$, which we denote by $1^j\,\lambda$. In other words, if $\lambda=m_1+\cdots +m_i$, then $1^j \lambda$ is the partition $1+ \cdots +1+m_1+\cdots + m_i$ where there are $j$ additional $1$'s. In \cite{VW}, Vakil and Wood made the following conjecture. 

\begin{conjecture}[Conjecture F]
For any irreducible smooth complex variety $X$, $\dim H_i(W_{1^j \lambda}(X);\bQ) = \dim H_i(W_{1^{j+1} \lambda}(X);\bQ)$ for $j \gg i$.
\end{conjecture}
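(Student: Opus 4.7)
The approach is to construct a stabilization map $\sigma: W_{1^j\lambda}(M) \to W_{1^{j+1}\lambda}(M)$ and show that it is a rational homology isomorphism in a range. The key combinatorial observation is that a partition $\mu$ of $k=j+|\lambda|$ is a coarsening of $1^j\lambda$ if and only if the parts of $\lambda$ can be \emph{bin-packed} into the parts of $\mu$---that is, assigned to parts of $\mu$ so that no part's total assigned value exceeds it, any deficit (which automatically totals $j$) being filled by $1$'s. After absorbing any parts of $\lambda$ equal to $1$ into the $j$ factor, whether $\mu$ lies in $W_{1^j\lambda}(M)$ depends only on the ``shape'' of $\mu$, not on $j$.

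To build $\sigma$, I fix a basepoint $p \in M$. Because $M$ is connected of dimension $\geq 2$, the complement of any finite set is path-connected, so any configuration can be isotoped away from a neighborhood of $p$, after which one adjoins a multiplicity-$1$ particle at $p$. The bin-packing characterization immediately implies that adding a new part of size $1$ to $\mu$ preserves incompatibility with $\lambda$ (once $\lambda$ has no parts equal to $1$), so $\sigma$ does land in $W_{1^{j+1}\lambda}(M)$. To prove $\sigma$ is a rational homology isomorphism in a range, I would stratify $W_{1^j\lambda}(M)$ by partition type and build a semi-simplicial resolution whose $p$-simplices decorate a configuration with an ordered tuple of $p+1$ distinct multiplicity-$1$ particles. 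Each stratum $S_\mu(M)$ is a labeled configuration space---a finite quotient of $F_{\ell(\mu)}(M)$ by a subgroup of $\fS_{\ell(\mu)}$ preserving multiplicity labels---whose rational homology stabilizes as the number of multiplicity-$1$ particles grows, by Church's representation stability for $H_*(F_n(M);\bQ)$ applied to a varying subgroup of $\fS_n$.

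The main obstacle is controlling the stratification uniformly to extract an explicit range. For each fixed homological degree $i$, only finitely many ``shapes'' $\mu_0$ (partitions with their $1$'s stripped) should contribute: strata with few multiplicity-$1$ particles have high codimension $d\cdot(k-\ell(\mu))$ in $\mr{Sym}_k(M)$, where $d=\dim M \geq 2$, so they only influence $H_i$ once $i$ is large. The bin-packing combinatorics provides the bookkeeping for these strata, but the delicate technical step is the connectivity estimate for the semi-simplicial resolution---equivalently, the convergence estimate for the stratification spectral sequence---which requires combining Church's stability ranges stratum-by-stratum into a single explicit range for $W_{1^j\lambda}(M)$. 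An additional subtlety, absent for surfaces, is that $\mr{Sym}_k(M)$ is singular when $\dim M \geq 3$, so the filtration must be set up intrinsically on $W_{1^j\lambda}(M)$ using the smoothness of each individual stratum rather than inherited from the ambient singular space.
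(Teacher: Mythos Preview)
Your core strategy---stratify $W_{1^j\lambda}(M)$ by partition type and invoke homological stability for the colored configuration spaces $S_{\lambda'}(M)$ appearing as strata---matches the paper's. The ``bin-packing'' picture you describe is exactly the collapse partial order on partitions that the paper uses. However, the technical implementation diverges in two substantial ways, and one of them is a genuine gap.

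\textbf{The stabilization map.} Your construction ``isotope the configuration away from $p$, then add a particle at $p$'' does not define a continuous map: there is no canonical way to choose the isotopy varying continuously with the configuration. The paper defines the stabilization map only when $M$ is the interior of a manifold with non-empty boundary, by bringing a particle in through a collar; for closed $M$ (e.g.\ projective varieties, which are covered by Conjecture F) no such map exists, and the paper instead uses the transfer map $\tau$ on rational homology, proving first that $\tau$ and $t_*$ are mutual inverses in the open case (via a Dold-style splitting argument, Lemma~5.2) and then bootstrapping to closed $M$ by a puncturing resolution. Your sketch does not address the closed case at all.

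\textbf{The spectral sequence.} You propose a semisimplicial resolution whose $p$-simplices are configurations decorated by an ordered $(p{+}1)$-tuple of their own multiplicity-$1$ particles, in the style of Randal-Williams--Wahl, and you correctly flag the connectivity estimate as the delicate step. The paper sidesteps this entirely: it uses the open filtration $U_p = \bigcup_{q\leq p}\cS_\lambda[q]$ and the spectral sequence in compactly supported cohomology (Proposition~2.2), which requires no connectivity hypothesis whatsoever---the spectral sequence exists and converges automatically for any finite open filtration. Poincar\'e duality, applied to $W_{1^j\lambda}(M)$ as the underlying space of an orientable orbifold (or with twisted coefficients in the non-orientable/odd-dimensional case), converts the compactly supported cohomology statement back into homology. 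This also dissolves your worry about the singularity of $\mr{Sym}_k(M)$ for $d\geq 3$: rational Poincar\'e duality holds for orbifolds, so no recourse to the ambient space is needed. The paper's semisimplicial resolution (Section~5) is used only for the passage from open to closed manifolds, and its simplices are punctures of $M$ \emph{disjoint} from the configuration, not particles selected from it; contractibility of its fibers is then a one-line flag-set argument.

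In short: your outline has the right shape, but the paper's route through $H^*_c$ and orbifold Poincar\'e duality is both cleaner (no connectivity to prove) and more complete (it handles closed $M$, which your stabilization map cannot).
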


We prove this conjecture, generalize it to the case of arbitrary connected smooth manifolds of dimension at least 2 (henceforth all manifolds are assumed smooth) and give an explicit homological stability range. That is, we prove the following theorem.

\begin{theorem} \label{main} Let $M$ be a connected manifold of dimension $d\geq 2$ and $\lambda$ a partition of $k$. We have that
\[H_i(W_{1^j \lambda}(M);\bQ) \cong H_i(W_{1^{j+1} \lambda}(M);\bQ)\]
for $i \leq j+k-1$. 
\end{theorem}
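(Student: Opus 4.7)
\emph{Proof plan.} My plan is to prove the theorem by a standard semi-simplicial resolution argument for homological stability, applied to the sequence $\{W_{1^j\lambda}(M)\}_j$ and built from ``arcs to multiplicity-$1$ particles,'' with the connectivity of the resolution playing the role of the technical heart.

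First I would reduce to the case that $M$ is noncompact by removing an open disk from $M$ and arguing that the induced map on symmetric complements is a rational homology isomorphism in the stated range (via the cofiber sequence associated to the stratification of $\mr{Sym}_{k+j}(M)$, since strata meeting the removed disk contribute only outside the stability range). With $M$ noncompact, choose a proper embedded ray $\gamma \subset M$. This provides a stabilization map $s \colon W_{1^j\lambda}(M) \to W_{1^{j+1}\lambda}(M)$ that translates configurations inward along $\gamma$ and inserts a fresh multiplicity-$1$ particle at the outer end. Well-definedness uses the elementary observation that in any coarsening of $1^{j+1}\lambda = 1^j\lambda \cup \{1\}$, a part equal to $1$ must arise from a singleton block; consequently, removing such a singleton $1$ from a coarsening of $1^{j+1}\lambda$ leaves a coarsening of $1^j\lambda$, so the condition of lying in $W_{1^{j+1}\lambda}(M)$ is preserved by adding an isolated multiplicity-$1$ particle.

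Next I would introduce an augmented semi-simplicial space $R_\bullet \to W_{1^{j+1}\lambda}(M)$ whose $p$-simplices consist of a configuration $c \in W_{1^{j+1}\lambda}(M)$ together with $p+1$ pairwise disjoint properly embedded arcs from the outer end of $\gamma$ to distinct multiplicity-$1$ particles of $c$, avoiding the remaining particles. The face maps delete an arc. Contractibility of the space of such arcs (once the endpoints and configuration are fixed) yields homotopy equivalences $R_p \simeq W_{1^{j-p}\lambda}(M)$, under which the face maps correspond to iterates of $s$. The associated spectral sequence has $E^1_{p,q} = H_q(W_{1^{j-p}\lambda}(M); \bQ)$, and provided the augmentation $\|R_\bullet\| \to W_{1^{j+1}\lambda}(M)$ is $(j+k)$-connected, induction on $j$ yields the stability range $i \leq j+k-1$ by the standard spectral sequence bookkeeping (the $d^1$ differentials become isomorphisms in the inductive range).

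The main obstacle is establishing this connectivity bound for the augmentation. Using $\dim M \geq 2$, the problem reduces to a parametrized connectivity statement: the simplicial complex of disjoint embedded arcs from $\gamma$ to the multiplicity-$1$ particles of a fixed configuration $c$ should be suitably highly connected as a function of the number of such particles, uniformly in $c$. I would prove this by the surgery-on-arcs technique adapted from the analogous result for configuration spaces (as in the work of Randal-Williams), with the additional subtlety that our arcs must avoid the higher-multiplicity particles, and that the resulting connectivity bound must be insensitive to the finer combinatorics of $\lambda$ (depending only on $k$ and the number of multiplicity-$1$ particles). Getting a clean bound of the right shape is where the explicit range $i \leq j+k-1$ is likely to enter, and is the most delicate part of the argument.
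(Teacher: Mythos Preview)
Your overall strategy---a Randal-Williams-style arc resolution with simplices given by arcs to multiplicity-$1$ particles---is genuinely different from the paper's, but it contains a fatal gap at exactly the point you flag as ``the most delicate part.''

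The connectivity of the augmentation $\|R_\bullet\|\to W_{1^{j+1}\lambda}(M)$ is controlled fiberwise by the arc complex on the multiplicity-$1$ particles of a configuration $c$. For this to be $(j+k)$-connected you would need every $c\in W_{1^{j+1}\lambda}(M)$ to have many multiplicity-$1$ particles. That is false: such configurations can have \emph{none}. Take $\lambda=1+3$ (so $k=4$) and $j=1$. A configuration with associated partition $2+2+2$ lies in $W_{1^2\lambda}(M)=W_{1^{j+1}\lambda}(M)$, because $2+2+2$ is not a collapse of $1+1+1+3$ (the part containing the original $3$ must have size $\geq 3$). Over this $c$ your arc complex is empty, so the augmentation is not even $0$-connected, let alone $(j+k)$-connected. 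More generally, the paper's Lemma~3.4 shows only that a stratum $S_{\lambda''}\subset W_{1^{j+1}\lambda}(M)$ with $p$ collapses has at least $j+k+1-2p$ ones, and $p$ ranges up to roughly $(j+k)/2$; there is no uniform positive lower bound. This is exactly why the paper does \emph{not} argue by a direct resolution: instead it stratifies $W_{1^{j}\lambda}(M)$ by the $S_{\lambda'}(M)$, passes to compactly supported cohomology via Proposition~\ref{specOpen}, and uses rational Poincar\'e duality together with the known stability for colored configuration spaces (Church, Randal-Williams) on each stratum. The shift in degrees coming from Poincar\'e duality is precisely what converts the weak bound ``at least $j+k+1-2p$ ones'' into the uniform range $*\leq j+k-1$.

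A second, independent problem is your reduction to the noncompact case. Removing an open disk from $M$ does \emph{not} induce a rational homology isomorphism on $W_{1^j\lambda}$ in the stated range; already for $\lambda=1^k$ one has $W_{1^j\lambda}(M)=C_{j+k}(M)$, and $C_n(S^2)$ and $C_n(\bR^2)$ differ in low degrees. Your cofiber-sequence sketch (``strata meeting the removed disk contribute only outside the stability range'') does not make sense as written, since every stratum meets the disk. The paper handles closed manifolds by an entirely different mechanism (Section~\ref{secpuncturing}): a puncturing resolution $\tilde{\cW}_\bullet(\lambda)$ whose $p$-simplices are configurations in $M$ minus $p{+}1$ \emph{external} points, together with the transfer map rather than a stabilization map.
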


We actually give a better range that depends on $M$ as well as $k$, described by functions $f_{M,k}:\bN_0 \m \bN_0$ defined in Equation \ref{eqnfmlambda} on page \pageref{eqnfmlambda}, where $\bN_0$ denotes the non-negative integers. The use of rational coefficients is essential in many parts of the argument but not all. See Remark \ref{remintegral} for a discussion of what results hold with integral coefficients.

In general, the isomorphism of Theorem \ref{main} is given by a transfer map which is described in Definition \ref{deftransfer}. When $M$ is the interior of a manifold with non-empty boundary one can also define a stabilization map $t: W_{1^j \lambda}(M) \m W_{1^{j+1} \lambda}(M)$ given by ``bringing a particle in from infinity,'' described in Definition \ref{defstab}. The stabilization map induces an isomorphism on rational homology in the same range as the transfer map. This is not a coincidence, but part of the proof. Our result uses homological stability for configuration spaces of unordered distinct particles as input \cite{Ch, RW, K}. It does not use homological stability for bounded symmetric powers and hence gives a new proof of Theorem 1.6 of \cite{kupersmillercompletions} with an improved range. Indeed, the range one obtains for $\mr{Sym}_k^{\leq c}(M)$ is $* \leq k-1$ if $M$ is a surface and $* \leq k$ otherwise.

\subsection{Motivic motivation for Conjecture F}

Conjecture F was inspired by Theorem 1.30 of \cite{VW} which can be thought of as Conjecture F's motivic analogue. As an abelian group, the Grothendieck ring of varieties $\cM$  is defined as the quotient of the free abelian group on the set of isomorphism classes of finite type schemes over a field $\mathbb K$, modulo the relation $[X] = [Z] + [X\backslash Z]$ whenever $Z \subset X$ is a closed subscheme. The ring structure is induced by Cartesian product. Let  $\mathbf{L}$ denote the affine line and let $\cM_\mathbf{L}$ denote the localization of $\cM$ obtained by inverting $[\mathbf{L}]$. The ring $\cM$ has a filtration induced by the dimension of the scheme. This dimensional filtration extends to a filtration of $\cM_\mathbf{L}$ and we denote the completion with respect to this filtration by $\widehat{\cM_\mathbf{L}}$. Let $X$ be an irreducible stably rational variety of dimension $d$. A consequence of Theorem 1.30 and Motivation 1.26 (i) of \cite{VW} is that the sequence $[W_{1^j \lambda}(X)]/[\mathbf{L}]^{dj}$ converges in $\widehat{\cM_\mathbf{L}}$ with respect to the topology induced by the dimensional filtration. 

From now on, we restrict to the case $\mathbb K =\bC$. Let $K_{HS}$ denote the Grothendieck group of the category of mixed Hodge structures. Define $HS:\cM \m K_{HS}$ by the formula $[X] \m \sum_i (-1)^i [H^i_c(X;\Q)]$. This extends to a continuous ring homomorphism $HS:\widehat{\cM_\mathbf{L}} \m \widehat{ K_{HS}}$ with $\widehat{ K_{HS}}$ a completion of $ K_{HS}$. Theorem 1.30 and Motivation 1.26 (ii) of \cite{VW} imply that $HS([W_{1^j \lambda}(X)]/[\mathbf{L}]^{dj})$ converges in $\widehat{ K_{HS}}$ for all smooth irreducible complex varieties $X$. The ring $\widehat{ K_{HS}}$ is called the completed ring of virtual Hodge structures and the map $HS$ is an example of a motivic measure. Theorem 1.30 of \cite{VW} is more general than these two results about convergence in $\widehat{\cM_\mathbf{L}}$ and $\widehat{ K_{HS}}$ and implies convergence after applying an arbitrary motivic measure under the assumption of motivic stability of symmetric powers. 

Conjecture F is part of a larger question concerning the relationship between homological stability and stability in the Grothendieck ring of varieties. For many sequences of varieties with homological stability, Vakil and Wood were able to prove that the corresponding elements in the Grothendieck ring converge; after applying a motivic measure, they were able to prove this in even more cases. For $W_{1^j \lambda}(X)$, the corresponding elements in the Grothendieck ring often stabilize and always stabilize in the ring of virtual Hodge structures. However, homological stability was previously not known. Conjecture F is obtained from the idea that one should expect homological stability in situations where there is stability in the Grothendieck ring or ring of virtual Hodge structures and vice versa. 

There is in fact a close but not exact relationship between the singular homology of a complex variety and its corresponding element in the Grothendieck ring. For example, for smooth and proper varieties, the element in $ \widehat{ K_{HS}}$ and hence the element in $\widehat{\cM_\mathbf{L}} $ determines the rational Betti numbers.  Using motivic zeta functions and a heuristic they dub ``Occam's razor for Hodge structures,'' Vakil and Wood developed a procedure for predicting rational Betti numbers from elements of the Grothendieck ring. This heuristic was designed to explain the apparent correlation between the two types of stability and give a prediction of the stable homology. Although there are some examples where Vakil and Wood's predictions of the stable homology are incorrect, see e.g. \cite{kupersmillernote, tommasi}, we know of no examples where they make incorrect predictions regarding whether a sequence of spaces has rational homological stability. It would be interesting to know under what conditions this motivic convergence is in fact equivalent to rational homological stability.

What about the stable homology? As noted before, Vakil and Wood's algebro-geometric approach of motivic zeta functions and ``Occam's razor for Hodge structures'' does not always correctly predict the limiting rational Betti numbers. The homotopy theoretic technique of ``scanning'' \cite{Se3, Mc1} also fails to provide an answer in general, though for bounded symmetric powers it can still be made to work \cite{K2, kupersmillercompletions}. The problem is that in general the spaces $W_{1^j \lambda}(M)$ cannot be characterized by local conditions. We would be interested in any new techniques that shed light onto the stable homology groups of the spaces $W_{1^j \lambda}(M)$ as well as their stable mixed Hodge structures (see Remark \ref{hodgeremark}).

\subsection{Outline} In Section \ref{secCS}, we describe a spectral sequence for computing compactly supported cohomology associated to an open filtration. In Section \ref{evenSec}, we prove Theorem \ref{main} when $M$ is an even dimensional orientable manifold which is the interior of a  manifold with non-empty boundary. In odd dimensions or when $M$ is not orientable, there are extra complications stemming from the fact that $W_{1^{j} \lambda}(M)$ is not orientable. In Section \ref{secNON}, we describe how to modify the proof to address these orientation issues. In Section \ref{secpuncturing}, we discuss how to remove the hypothesis that $M$ is the interior of a manifold with non-empty boundary.  

\subsection{History of this paper}The first two authors and the third author independently proved Conjecture F in \cite{kupersmillerconjecturef} and \cite{transymmetric} respectively. This paper was obtained by merging those two preprints.

\subsection{Acknowledgements} We would like to thank Martin Bendersky, Tom Church, S{\o}ren Galatius, Daniel Litt, Sam Nariman, Ravi Vakil, Alexander Voronov, Nathalie Wahl, Craig Westerland, and Melanie Wood for helpful discussions. Additionally, we thank the anonymous referee for many helpful suggestions and corrections.

\section{Compactly supported cohomology}\label{secCS}

The use of compactly supported cohomology for proving homological stability results was pioneered by Arnol'd in \cite{Ar}. In this section we review basic properties of compactly supported cohomology and describe a spectral sequence associated to an open filtration. If $N_j$ is a sequence of orientable manifolds each of dimension $n_j$, then $H_*(N_j) \cong H_*(N_{j+1})$ for $* \leq r_j$ if and only if $H^{*}_c(N_j) \cong H^{*+n_{j+1}-n_j}_c(N_{j+1})$ for $* \geq n_j-r_j$. Thus, for orientable manifolds homological stability in a range bounded from above is equivalent to stability in a range bounded from below for compactly supported cohomology. It is sometimes more convenient to use compactly supported cohomology for the following reason. Suppose we have a sequence $X_j$ of filtered spaces; then one can often leverage stability for the filtration differences $F_p X_j \backslash F_{p-1} X_j$ to prove stability for the $X_j$. This can be done using the following long exact sequence in compactly supported cohomology (for example see III.7.6 of \cite{iversen}) and the subsequent spectral sequence derived from it. 

\begin{remark}\label{remarksheaf} Our reference \cite{iversen} uses sheaf cohomology, but Theorem III.1.1 of \cite{bredonsheaf} implies that for locally compact locally path-connected Hausdorff spaces, we have that compactly supported sheaf cohomology with coefficients in a locally constant local system coincides with compactly supported singular cohomology in that locally constant local system.\end{remark}

\begin{proposition} \label{exactseq} Let $X$ be a locally compact and locally path-connected Hausdorff space and $C \subset X$ a closed subspace that is also locally path-connected. Denote its open complement $U = X \backslash C$. There is a long exact sequence in compactly supported cohomology
\[\cdots \to H^*_c(U) \to H^*_c(X) \to H^*_c(C) \to H^{*+1}_c(U) \to \cdots\]
The same holds for compactly supported cohomology with coefficients in a local coefficient system on $X$.
\end{proposition}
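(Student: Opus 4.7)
The plan is to derive this long exact sequence from the standard short exact sequence of sheaves on $X$ associated to the decomposition into an open part and its closed complement. Writing $j\colon U \hookrightarrow X$ for the open inclusion and $i\colon C \hookrightarrow X$ for the closed inclusion, and letting $\bZ_{(-)}$ denote the constant sheaf, there is a short exact sequence of sheaves on $X$
\[0 \to j_!\, \bZ_U \to \bZ_X \to i_*\, \bZ_C \to 0,\]
which can be verified stalkwise (it reduces at each point to $0 \to \bZ \to \bZ \to 0$ on $U$ and $0 \to 0 \to \bZ \to \bZ \to 0$ on $C$). Applying the right-derived functor of compactly supported global sections yields a long exact sequence in compactly supported sheaf cohomology. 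To identify the outer terms one uses that $j_!$ is exact and left-adjoint to $j^*$ with the property $R\Gamma_c(X; j_!\,\cF) \cong R\Gamma_c(U; \cF)$, and that for the closed inclusion $i_*$ is exact and satisfies $R\Gamma_c(X; i_*\,\cG) \cong R\Gamma_c(C; \cG)$. This is exactly the content of III.7.6 in \cite{iversen}.

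To pass from sheaf-theoretic compactly supported cohomology to the singular version in the statement, I would invoke Remark \ref{remarksheaf}. Under our hypotheses, $X$, $U$ (open in $X$), and $C$ (closed in a locally compact Hausdorff space and explicitly assumed locally path-connected) are all locally compact locally path-connected Hausdorff, so by the cited theorem of Bredon the two notions agree, with constant coefficients or more generally with a locally constant local system.

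For the local coefficient version the same strategy applies: given a local system $\cL$ on $X$, the sequence
\[0 \to j_!\, j^*\cL \to \cL \to i_*\, i^*\cL \to 0\]
is exact (again checked at stalks), and $j^*\cL$, $i^*\cL$ are the restrictions of $\cL$ to $U$ and $C$. The same adjunction identifications as above then give the corresponding long exact sequence in compactly supported cohomology with coefficients in these restricted local systems.

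There is no real obstacle here; the only subtlety is the comparison between compactly supported sheaf cohomology and compactly supported singular cohomology, which is handled entirely by Remark \ref{remarksheaf}. Everything else is formal sheaf-theoretic manipulation from \cite{iversen}.
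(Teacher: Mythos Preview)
Your proposal is correct and is essentially the argument the paper points to: the paper does not give its own proof but simply cites III.7.6 of \cite{iversen}, which is exactly the sheaf-theoretic derivation via $0 \to j_!\,\bZ_U \to \bZ_X \to i_*\,\bZ_C \to 0$ that you sketch, together with the comparison to singular cohomology from Remark~\ref{remarksheaf}. There is nothing to add.
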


Here we remark that open subsets of a locally compact and locally path-connected Hausdorff space are again locally compact and path-connected Hausdorff spaces. Closed subsets are similarly locally compact and Hausdorff, and our additional hypothesis on $C$ guarantees it is furthermore locally path-connected. Iterating this proposition gives the following spectral sequence associated to a finite open filtration. We are unaware of a reference so we sketch a proof.

\begin{proposition}\label{specOpen} Let $X$ be a locally compact and locally path-connected Hausdorff space and 
\[\ldots = U_{M-1} = U_{M} = \emptyset \subset U_{M+1} \subset \ldots \subset X = U_N =  U_{N+1} = \ldots\]
be an increasing sequence of open subsets of $X$, such that each $U_p \backslash U_{p-1}$ is locally path-connected. Then there is a spectral sequence  converging to $H^{p+q}_c(X)$ with $E^1$-page given by
\[E^1_{p,q} = H^{p+q}_c(U_{p}\backslash U_{p-1})\]
There is a similar spectral sequence for compactly supported cohomology with coefficients in any local system on $X$. It is natural with respect to open embeddings compatible with the filtrations.
\end{proposition}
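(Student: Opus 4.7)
The plan is to build the spectral sequence from an exact couple constructed from the long exact sequences of Proposition~\ref{exactseq}. First, for each $p$ I would apply Proposition~\ref{exactseq} to the ambient space $U_p$ (which is locally compact and locally path-connected Hausdorff as an open subset of $X$), its open subspace $U_{p-1}$, and the closed complement $U_p \setminus U_{p-1}$ (locally path-connected by hypothesis). This yields a long exact sequence
\[
\cdots \to H^n_c(U_{p-1}) \to H^n_c(U_p) \to H^n_c(U_p \setminus U_{p-1}) \to H^{n+1}_c(U_{p-1}) \to \cdots
\]
for each $p$.

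Next I would assemble these into an (unrolled) exact couple in the standard way. Set $D^1_{p,q} = H^{p+q}_c(U_p)$ and $E^1_{p,q} = H^{p+q}_c(U_p \setminus U_{p-1})$, with structural maps given by the extension-by-zero map $D^1_{p-1,q+1} \to D^1_{p,q}$, the restriction $D^1_{p,q} \to E^1_{p,q}$, and the boundary $E^1_{p,q} \to D^1_{p-1,q+2}$, so that $d_1 = j \circ k$. The standard derived-couple machinery then produces a cohomologically indexed spectral sequence with the desired $E^1$-page.

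For convergence, since the filtration is finite (it stabilizes at $\emptyset$ for $p \leq M$ and at $X$ for $p \geq N$), only finitely many differentials are nonzero on any given bidegree, so the spectral sequence converges strongly to $H^{p+q}_c(X)$ filtered by the images of the maps $H^*_c(U_p) \to H^*_c(X)$. The case of coefficients in a local system $\cL$ on $X$ is handled identically, using the version of Proposition~\ref{exactseq} with coefficients in the restrictions of $\cL$ to $U_p$, $U_{p-1}$, and $U_p \setminus U_{p-1}$. Naturality with respect to open embeddings compatible with the filtrations follows immediately from the naturality of the underlying long exact sequences, since all three maps in Proposition~\ref{exactseq} are natural in the pair.

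The only subtle point is to verify that the induced filtration on $H^*_c(X)$ agrees with the image filtration coming from the open inclusions $U_p \hookrightarrow X$; this is immediate once one unwinds the exact couple in the finite setting. Thus the main ``obstacle'' is really just careful bookkeeping with bidegrees and sign conventions rather than any conceptual difficulty, making this essentially a direct adaptation of the standard filtration spectral sequence to the compactly supported setting.
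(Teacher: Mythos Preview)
Your proposal is correct and follows essentially the same approach as the paper: both construct the exact couple with $D$-terms $H^{p+q}_c(U_p)$ and $E$-terms $H^{p+q}_c(U_p\setminus U_{p-1})$ from the long exact sequences of Proposition~\ref{exactseq}, and both invoke finiteness of the filtration to obtain strong convergence to $H^*_c(X)$. The paper's write-up is slightly more explicit about the bidegrees of the structure maps and phrases convergence in terms of $A_{-\infty,*}=0$ and $A_{\infty,*}=H^*_c(X)$, but there is no substantive difference.
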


\begin{proof}The idea is to splice together the long exact sequences for the inclusions of a closed set and its complement $U_i \backslash U_{i-1} \hookrightarrow U_i  \hookleftarrow U_{i-1}$, given by Proposition \ref{exactseq}
\[\cdots \to H^*_c(U_{i-1}) \to H^*_c(U_i) \to H^*_c(U_i \backslash U_{i-1}) \to H^{*+1}_c(U_{i-1}) \to \cdots\]
and consider the following exact couple:

\noindent\begin{minipage}{.5\linewidth}
\[\xymatrix{A \ar[rrrr]^i_{(1,-1)} & & & & A \ar[dll]^j_{(0,0)} \\
   & & E \ar[ull]^k_{(-1,2)} & & }\]
\end{minipage}%
\begin{minipage}{.3\linewidth}
  \begin{align*}A_{p,q} &= H^{p+q}_c(U_p)\\
  E_{p,q} &= H^{p+q}_c(U_p \backslash U_{p-1})\end{align*}
\end{minipage} 

\noindent with $(a,b)$ denoting the shift in bigrading. Here $i$ is the sum of the maps $H^{*}_c(U_{p-1}) \to H^{*}_c(U_p)$ induced by the inclusion of open subsets, $j$ is induced by the restriction map $H^*_c(U_p) \to H^*_c(U_p \backslash U_{p-1})$ along the closed inclusion $ U_p \backslash U_{p+1} \hookrightarrow U_p$ and $k$ is given by the boundary maps $H^*_c(U_p \backslash U_{p-1}) \to H^{*+1}_c(U_{p-1})$. 

The general machinery of exact couples gives us a spectral sequence with 
\[E^1_{p,q} = H^{p+q}_c(U_p \backslash U_{p-1}) \]
which has differentials $d_r$ of bidegree $(-r,r+1)$. To check that this converges and compute what it converges to, we note that there are only finitely many $p$ such that $i: A_{p,q} \to A_{p+1,q-1}$ is not an isomorphism. In particular, $A_{p,q}$ is $0$ for $p$ sufficiently small and $A_{p,q} = H^{p+q}_c(X)$ for $p$ sufficiently large. In this situation we can define $A_{-\infty,*}$ and $A_{\infty,*}$ in terms of $A_{p,*}$ for $p$ sufficiently small and large respectively. It suffices to note that a spectral sequence of an exact couple with such a finite filtration has $E^\infty$-page isomorphic to the associated graded of the filtration of $A_{\infty,*}$ if $A_{-\infty,*} = 0$. In our case $A_{\infty,*} \cong  H^*_c(X)$ and $A_{-\infty,*} = 0$ and we get the desired result.
\end{proof}

\section{The proof for open orientable manifolds of even dimension}\label{evenSec}

In this section we prove homological stability for $\symw_{1^j\lambda}(M)$ where $M$ is a connected orientable manifold of even dimension $d = 2n \geq 2$ that is the interior of a manifold with non-empty boundary.

We start by defining the stabilization map. Let $M$ be the interior of $\bar{M}$, a connected manifold with non-empty boundary $\partial \bar{M}$. We do not require $\bar{M}$ to be compact. Let $D^k$ denote the open $k$-disk and $\bar{D}^k$ denote the closed $k$-disk. Pick an embedding $\phi: \bar{D}^{d-1} \hookrightarrow \partial \bar{M}$ and a homeomorphism \[\psi: \mr{int}(\bar{M} \cup_{\phi} \bar{D}^{d-1} \times [0,1)) \to M\] whose inverse is isotopic to the inclusion of $M$ into  $\mr{int}(\bar{M} \cup_{\phi} \bar{D}^{d-1} \times [0,1))$. 

\begin{definition} \label{defstab}
The stabilization map $t: \bR^{d} \times W_{1^j \lambda}(M) \to W_{1^{j+1} \lambda}(M)$ is defined as follows: For $z \in \bR^{d}$ and $\xi \in W_{1^j \lambda}(M)$, let $\xi \cup z$ be the element of $W_{1^{j+1} \lambda}(\mr{int}(\bar{M} \cup_{\phi} \bar{D}^{d-1} \times [0,1)))$ given by $\xi$ in $M$ and $z$ in $ \bR^{d} \cong D^{d-1} \times (0,1)$. Define $t$ by the formula $t(z,\xi) = \hat \psi(\xi \cup z)$ where \[\hat \psi:W_{1^{j+1} \lambda}(\mr{int}(\bar{M} \cup_{\phi} \bar{D}^{d-1} \times [0,1))) \to W_{1^{j+1} \lambda}(M)\] is the map induced by applying $\psi$ to every particle in the configuration.
\end{definition}

Note that this map depends on a choice of embedding and homeomorphism. However, up to homotopy, it only depends on the choice of $\bar{M}$ and the choice of the component of $\partial \bar{M}$. Note that up to proper homotopy, the stabilization map also depends on a choice of orientation of the component of $\partial \bar{M}$, provided  that component is orientable. Since $\bR^{d}$ is contractible, $H_*(\bR^{d} \times W_{1^j \lambda}(M)) \cong H_*( W_{1^j \lambda}(M)) $.  However, our proof of homological stability will use compactly supported cohomology and from that perspective, the copy of $\bR^{d}$ is relevant. In particular, it makes the stabilization map an open embedding so it induces a map on compactly supported cohomology. In a similar fashion, one can define stabilization maps for the strata $S_{\lambda}(M)$. To state the main result of this section, we need the following definition.

\begin{definition}\label{defcondasta} A manifold $M$ is said to satisfy condition $(*)_a$ for $a<\dim M-1$ if $M$ is orientable and $\tilde H_i(M;\bQ) = 0$ for $ i \leq a$.
\end{definition} 

The goal of this section is to prove the following proposition.

\begin{proposition}\label{propevenopen} Let $M$ be a connected orientable manifold of even dimension $d = 2n \geq 2$ that is the interior of a manifold with non-empty boundary and $\lambda$ be a partition of $k$. The stabilization map
	\[ \stab_* : H_i(\R^d \times \symw_{1^j \lambda}(M); \mathbb{Q}) \rightarrow H_i(\symw_{1^{j+1}\lambda}(M); \mathbb{Q}) \]
is an isomorphism for $i < f_{M, k}(j)$ and a surjection for $i = f_{M, k}(j)$, where $f_{M, k}(j)$ is defined as follows\footnote{The cases in the definition of $f_{M,k}$ are not mutually exclusive. If a manifold is covered by more than one of the cases, take the maximum range.}:
\begin{equation}\label{eqnfmlambda} 
f_{M,k}(j) = 
	\begin{cases} j + k	 	& \text{if $\dim M = d >2$}; \\
	j + k 	-1					& \text{if $\dim M = 2$ and $M$ is orientable}; \\
	j+k &\text{if $\dim M = 2$  and $M$ is not orientable;} \\
	\min(a+1,\frac{\dim M}{2})(j+k)-1 				& \text{if condition $(*)_a$ holds for $a \geq 1$} \\
 \end{cases}
\end{equation}
\end{proposition}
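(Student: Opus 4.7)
The plan is to apply the open-filtration spectral sequence of Proposition \ref{specOpen} to both $\bR^d \times \symw_{1^j\lambda}(M)$ and $\symw_{1^{j+1}\lambda}(M)$, filtered by the number of distinct particles, and to compare them via the open embedding $\stab$. Poincar\'e duality then translates the $E^1$-terms into ordinary homology of colored configuration spaces, reducing the claim to known homological stability for those.

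\textbf{Filtration.} For $N \in \{k+j, k+j+1\}$, the space $\mr{Sym}_N(M)$ is stratified by the $\syms_\mu(M)$, with $\overline{\syms_\mu(M)} = \bigsqcup_{\nu \succeq \mu} \syms_\nu(M)$ under the coarsening order; since coarsening strictly decreases the number of parts,
\[
U_p := \{ \xi \in \symw_{1^j\lambda}(M) : \xi \text{ has at least } N-p+1 \text{ distinct particles}\}
\]
is a finite increasing filtration by opens whose successive quotients are disjoint unions of the strata $\syms_\mu(M)$ with $\mr{parts}(\mu) = N-p+1$ and $\mu \not\succeq 1^j\lambda$. The analogous filtration on $\symw_{1^{j+1}\lambda}(M)$ is compatible with $\stab$, which adds exactly one new distinct particle in the cylindrical end. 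Since $d$ is even and $M$ is orientable, every $\sigma \in \fS_n$ acts on $M^n$ with sign $\mr{sgn}(\sigma)^d = +1$, so each stratum $\syms_\mu(M)$ is an orientable manifold of dimension $d\cdot\mr{parts}(\mu)$ and admits Poincar\'e duality.

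\textbf{Comparison.} Proposition \ref{specOpen} yields a map of spectral sequences; on the $E^1$-page each summand is $H^{p+q}_c(\syms_\mu(M);\bQ) \cong H_{d\cdot\mr{parts}(\mu)-p-q}(\syms_\mu(M);\bQ)$. Up to the degree shift coming from the $\bR^d$ factor, the induced map on each summand is the standard particle-adding stabilization $\syms_\mu(M) \to \syms_{1\mu}(M)$ between colored configuration spaces. The homological stability results of \cite{Ch, RW, K} make this an isomorphism on $E^1$ in a range, and distinguishing the four regimes in \ref{eqnfmlambda} (higher-dimensional; orientable surface; nonorientable surface; highly connected $M$ under condition $(*)_a$) gives the four cases of $f_{M, k}$. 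The ``extra'' strata in the target filtration not of the form $\syms_{1\mu}$ are exactly those $\syms_\nu(M)$ with $\nu \vdash k+j+1$ having no part equal to $1$; any such $\nu$ has $\mr{parts}(\nu) \leq \tfrac{1}{2}(k+j+1)$, so by Poincar\'e duality these strata contribute to $H_i(\symw_{1^{j+1}\lambda}(M); \bQ)$ only for $i \geq \tfrac{1}{2}d(k+j+1)$, which exceeds $f_{M, k}(j)$ in every case. A standard spectral-sequence comparison argument then yields the desired isomorphism below $f_{M, k}(j)$ and surjectivity at $f_{M, k}(j)$.

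\textbf{Main obstacle.} The chief difficulty is combining the per-stratum stability range (which depends on $\mu$), the Poincar\'e duality degree shift (which depends on $\dim \syms_\mu(M)$), and the dimensional vanishing bound for the singleton-free extra strata, into a single uniform range $f_{M, k}(j)$. The four cases in $f_{M, k}$ correspond directly to the four quantitative slopes available for homological stability of colored configuration spaces: Arnold's slope $\tfrac{1}{2}$ on surfaces, the slope-$1$ range of Church--Randal-Williams--Kupers in dimensions $\geq 3$, and the improved slope $\min(a+1, d/2)$ available when $M$ is highly connected.
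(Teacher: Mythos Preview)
Your proposal is essentially the paper's own argument: filter by number of distinct particles, apply the compactly-supported spectral sequence of Proposition~\ref{specOpen}, use Poincar\'e duality on strata to reduce to colored-configuration stability (Lemma~\ref{lemOpenstratastab}), and run a spectral-sequence comparison (Lemma~\ref{lemSSbound}). Your separate treatment of the singleton-free ``extra'' strata is equivalent to the paper's combined Lemma~\ref{lemstabrangeopencombined}, which folds the component bijection of Lemma~\ref{lemStratabound} into the stability range $*\le k+j-2p$.

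One point you pass over that the paper handles explicitly: rational Poincar\'e duality for $\symw_{1^j\lambda}(M)$ itself, not just for the strata. You use it implicitly when you translate the $H^*_c$ conclusion back to $H_i(\symw_{1^{j+1}\lambda}(M);\bQ)$, but $\symw_{1^j\lambda}(M)$ is not a manifold for $d>2$; the paper justifies duality by observing it is the underlying space of an orientable orbifold (quotient of an open subset of $M^{k+j}$ by an orientation-preserving $\fS_{k+j}$-action, using $d$ even). Relatedly, your remark that the extra strata ``contribute to $H_i(\symw_{1^{j+1}\lambda}(M);\bQ)$ only for $i\ge\tfrac12 d(k+j+1)$'' should be phrased as a statement about $E^1_{p,q}$ (they vanish for $p+q>\tfrac12 d(k+j+1)$), since spectral-sequence comparison requires control at the $E^1$-level, not merely on the abutment. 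Finally, your aside about ``Arnold's slope $\tfrac12$ on surfaces'' is the integral range; the rational range used here is slope $1$ (Church), which is why $f_{M,k}(j)=j+k-1$ rather than $\tfrac12(j+k)$ in the orientable-surface case.
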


In \cref{lemtransferinverse}, we will show that the stabilization map is always injective on homology which will imply that it is an isomorphism for $i=f_{M,k}(j)$.

Our approach for proving \cref{propevenopen} will be to filter $\symw_{1^j \lambda}(M)$ such that the filtration will consist of disjoint unions of spaces of the form $S_{\lambda'}(M)$. We use this to produce a spectral sequence for compactly supported cohomology using \cref{specOpen}. Using homological stability for the filtration differences and rational Poincar\'{e} duality we will see that the $\symw_{1^j \lambda}(M)$ satisfy stability in compactly supported cohomology. Again by rational Poincar\'{e} duality this will be equivalent to showing that $\symw_{1^j \lambda}(M)$ satisfies rational homological stability.

Using the homological stability results of Church \cite{Ch} and Randal-Williams \cite{RW}, we prove that the stabilization map induces isomorphisms on homology for each of the components of each of the filtration differences. This is sometimes referred to as homological stability for colored configuration spaces. A version of it appears as Theorem 5 of \cite{Ch}.

\begin{lemma}\label{lemOpenstratastab} Let $\lambda$ be a partition with $i$ $1$'s. If $M$ is connected and orientable, the map \[t_*: H_*(\R^{d} \times S_{\lambda}(M);\bQ) \to H_*(S_{1\,\lambda}(M);\bQ)\] is an isomorphism in the following ranges: (i) $* \leq i$ if $M$ is of dimension $d>2$, (ii) $* < i$ if $M$ is of dimension $d=2$, and (iii) $* < (a+1)i$ if condition $(*)_a$ holds.\end{lemma}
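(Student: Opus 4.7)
The plan is to realize $S_\lambda(M)$ as the total space of a fiber bundle over a smaller colored configuration space whose fiber is an unordered configuration space, and then to reduce the lemma to classical homological stability for unordered configurations via a Leray--Serre spectral sequence comparison.

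Write $\lambda = 1^i + \mu$, where $\mu$ collects the parts of $\lambda$ of size $\geq 2$, and let $s$ be the number of parts of $\mu$. Forgetting the $1$-labeled particles defines a locally trivial fiber bundle $\pi : S_\lambda(M) \to S_\mu(M)$ whose fiber over a set of $s$ distinct points $Q \subset M$ is the unordered configuration space $C_i(M \setminus Q)$; local triviality follows from the isotopy extension theorem. The analogous bundle $\pi' : S_{1\lambda}(M) \to S_\mu(M)$ has fiber $C_{i+1}(M \setminus Q)$. The stabilization map $t$ from Definition \ref{defstab} commutes with $\pi$ and $\pi'$, because it inserts the new $1$-labeled particle near a boundary component of $\bar M$, well away from the $Q$-particles of larger multiplicity. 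Consequently $t$ restricts on each fiber to the ordinary stabilization map
$$\R^d \times C_i(M \setminus Q) \longrightarrow C_{i+1}(M \setminus Q),$$
and the open manifold $N := M \setminus Q$ is again the interior of a manifold with non-empty boundary, obtained from $\bar M$ by removing small open disks around the points of $Q$.

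Next, I invoke homological stability for unordered configuration spaces on the fiber. The manifold $N$ is connected, of dimension $d$, and orientable whenever $M$ is. Moreover, if $M$ satisfies condition $(*)_a$ for some $a < d - 1$, then so does $N$: excision and the long exact sequence of the pair $(M, M \setminus Q)$ give $\tilde H_i(N;\bQ) \cong \tilde H_i(M;\bQ)$ for $i \leq d - 2$, hence $\tilde H_i(N;\bQ) = 0$ for $i \leq a$. The results of Church \cite{Ch} and Randal-Williams \cite{RW}, together with the improvements of \cite{K} under $(*)_a$, then show that the fiberwise stabilization $t_*: H_*(\R^d \times C_i(N);\bQ) \to H_*(C_{i+1}(N);\bQ)$ is an isomorphism in degrees $* \leq i$ when $d > 2$, in degrees $* < i$ when $d = 2$, and in degrees $* < (a+1)i$ when condition $(*)_a$ holds.

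Finally, I compare the rational Leray--Serre spectral sequences associated to $\pi$ and $\pi'$. Since $t$ is a map of fibrations over $S_\mu(M)$, it induces a map of local coefficient systems $\cH_q(C_i(M \setminus Q);\bQ) \to \cH_q(C_{i+1}(M \setminus Q);\bQ)$ on $S_\mu(M)$ which, by the preceding step, is an isomorphism of local systems for each $q$ in the ranges above. Because these ranges depend only on the fiber degree $q$, the induced map on $E^2_{p,q} = H_p(S_\mu(M); \cH_q)$ is an isomorphism in the same range of $q$, and so is the map on $E^\infty_{p,q}$ and hence on $H_n(S_\lambda(M);\bQ) \to H_n(S_{1\lambda}(M);\bQ)$ for $n$ in the stated range. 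The main obstacle is nothing deep: it is the verification that $t$ is fiberwise over the forgetful bundle and that $(*)_a$ is inherited by $M \setminus Q$. Both are routine, after which the argument is a direct Serre spectral sequence comparison driven by classical unordered configuration space stability.
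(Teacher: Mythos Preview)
Your argument is essentially identical to the paper's: both forget the multiplicity-$1$ particles to obtain a fiber bundle over $S_\mu(M)$ with fiber an unordered configuration space of a punctured $M$, note that $t$ is a map of fibrations acting as the usual stabilization on fibers, and conclude by a Serre spectral sequence comparison fed by classical configuration-space stability. Two small corrections: the improved range under condition $(*)_a$ is Proposition~4.1 of \cite{Ch} (not \cite{K}, which the paper uses only for non-orientable surfaces), and since Church's ranges are proved for the transfer map rather than $t_*$, the paper also invokes \cref{lemtransferinverse} to deduce that $t_*$ is an isomorphism in the same range.
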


\begin{proof}Let $\lambda'$ be the partition such that $1^i \lambda' =\lambda$ and let $r$ be the cardinality of $\lambda'$, i.e. $\lambda'$ is given by $m_1 + \cdots + m_r$ with $m_i \geq 2$ for all $1 \leq i \leq r$. There is a fiber sequence: \[S_{1^{i}}(M \backslash \{r \text{ points}\}) \to S_{\lambda}(M) \to S_{\lambda'}(M)\] where the map $S_{ \lambda}(M) \to S_{\lambda'}(M)$ is the map that forgets all of the particles labeled by the number 1. The stabilization map induces a map from the Serre spectral sequence for this fibration (multiplied with $\bR^d$) to the Serre spectral sequence for the version of this fibration associated to $S_{1\, \lambda}(M)$. The map on base spaces is homotopic to the identity.

The result now follows by spectral sequence comparison and the homological stability ranges of Church and Randal-Williams: a range $*<i$ for all dimensions $\geq 2$ from Corollary 3 of \cite{Ch}, a range $*\leq i$ for all dimensions $\geq 3$ from Theorem B of \cite{RW} and the improved range with vanishing reduced Betti numbers from Proposition 4.1 of \cite{Ch}. 

Two remarks are in order. Firstly, Church's results concern the transfer map, not the stabilization map. This is not an issue as Lemma \ref{lemtransferinverse} shows that the stabilization maps are always injective and injective maps between isomorphic vector spaces of finite dimension are isomorphisms. Assuming that our manifolds are the interior of a manifold with a finite handle decomposition guarantees the relevant homology groups are finite-dimensional. This assumption can be removed either by noting that Lemma \ref{lemtransferinverse} proves the stronger statement that the stabilization map and transfer map are inverse up to an automorphism in the stable range (which is also proven in Section 7 of \cite{RW} using Lemma 2.2 of \cite{Do}), or by exhausting the manifold by submanifolds with finite handle decompositions. Secondly, if $M$ has the property that $\tilde H_i(M;\bQ) = 0$ for $ i \leq a$ with $a < \dim M -1$, then by Mayer-Vietoris $M \backslash \{r \text{ points}\}$ has the same property. In other words, removing points from a manifold preserves conditions $(*)_a$.\end{proof}

The spaces $\symw_\lambda(M)$ have a filtration whose filtration differences are disjoint unions of spaces of the form $\syms_{\lambda'}(M)$. We will now describe this filtration. An \textit{elementary collapse} of a partition $\lambda$ is a partition $\lambda'$ which is identical to $\lambda$ except that two integers have been replaced by their sum. A partition $\lambda'$ is a \textit{collapse} of $\lambda$ if $\lambda'$ can be constructed from $\lambda$ by a sequence of elementary collapses. In this case we write $\lambda' \leq \lambda$. For example $1 + 2 + 2 + 4 \leq 1 + 1 + 1 + 2 + 4$. For $\lambda$ a partition of $k$, let $\rm{col}_p(\lambda)$ be the set of collapses of $1^k$ by $p$ elementary collapses that are not collapses of $\lambda$. This is well-defined because even though there are many possible sequences of elementary collapses, the number of elementary collapses from $1^k$ to a given $\lambda$ is always the same. In particular,  $\rm{col}_p(\lambda)$ is the set of partitions $\lambda' \not\leq \lambda$ with cardinality $k - p$. 

\begin{example} If $\lambda = 1 + 3$, then $\rm{col}_0(\lambda) = \{ 1 + 1 + 1 + 1 \}, \rm{col}_1(\lambda) = \{1 + 1 + 2\}$, $\rm{col}_2(\lambda)= \{2+2\}$, and $\rm{col}_p(\lambda) =  \emptyset $ for $p \geq 3$. In particular, $1 + 3 \not\in \rm{col}_2(\lambda)$.
\end{example}

The filtration of $\symw_\lambda(M)$ that we will describe will have filtration differences whose components are indexed by the sets $\rm{col}_p$. We remark that $W_\lambda(M)$ as a set is a disjoint union of $S_{\lambda'}(M)$ for $\lambda' \in \rm{col}_q(\lambda)$ with $0 \leq q \leq k-1$. Our filtration will be obtained by only taking those collapses in $\rm{col}_q(\lambda)$ for $0 \leq q \leq p$.

\begin{definition}
\label{secStrata} For a partition $\lambda$, manifold $M$ and $p \geq 0$, let
\[\cS_\lambda[p] = \bigcup_{\lambda' \in \mr{col}_{p}(\lambda)}S_{\lambda'}(M) \qquad \text{and} \qquad 
U_{p} = \bigcup_{q \leq p} \cS_\lambda[q]. \]
\end{definition}

We make several remarks about this filtration. As a space, $\cS_\lambda[p]$ is not just a disjoint union of sets, but is in fact a disjoint union of spaces. Indeed, for the associated partition to change, particles would need to collide or fall apart, going to a higher or lower part of the filtration. If $\dim M \geq 2$ and $M$ is path-connected, then each of the $S_{\lambda'}(M)$ is path-connected, so the path components of $\cS_{\lambda}[p]$ are indexed by $\mr{col}_p(\lambda)$. Furthermore, $U_p$ is also equal to the subspace of $W_\lambda(M)$ such that forgetting the multiplicities of the particles results in a configuration of at least $k-p$ distinct unordered particles. From this one can see that we have an increasing filtration by open subsets, with $U_p=\symw_\lambda(M)$ for $p \geq k-1$,  $U_{0}=S_{1^k}(M)$ and $U_p$ empty for $p < 0$. Crossing with $\R^{d}$ gives a filtration of $\R^{d} \times \symw_\lambda(M)$. Using \cref{specOpen} applied to the filtrations of $ \symw_\lambda(M)$ and $\R^{d} \times \symw_{\lambda}(M)$ gives the following spectral sequences.

\begin{lemma} \label{specEven} There is a first quadrant spectral sequence converging to $H^{p+q}_c(\symw_{\lambda}(M) )$ with $E^1$-page 
\[E^1_{p,q} = H_c^{p+q}(\cS_{\lambda}[p] ).\]	
Similarly, there is a first quadrant spectral sequence converging to $H^{p+q}_c(\mathbb{R}^{d} \times \symw_{\lambda}(M))$ with $E^1$-page
\[{'E}^1_{p,q} = H_c^{p+q}(\mathbb{R}^{d} \times \cS_{\lambda}[p]). \]
\end{lemma}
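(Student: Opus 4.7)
The plan is to apply Proposition~\ref{specOpen} directly to the filtration $\{U_p\}$ of $\symw_\lambda(M)$ from Definition~\ref{secStrata}, and likewise to the product filtration $\{\R^d \times U_p\}$ of $\R^d \times \symw_\lambda(M)$. To invoke that proposition I need to verify: (a) the ambient space is locally compact, locally path-connected, and Hausdorff; (b) each $U_p$ is open; (c) each filtration difference $U_p \setminus U_{p-1}$ equals $\cS_\lambda[p]$ and is locally path-connected; and (d) the filtration is eventually constant in both directions, so that the resulting spectral sequence converges to the compactly supported cohomology of the ambient space.

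For (a), $\mr{Sym}_k(M) = M^k/\fS_k$ is the orbit space of a proper action of a finite group on a manifold, and so inherits from $M$ the properties of being Hausdorff, locally compact, and locally path-connected; its open subspace $\symw_\lambda(M)$ does as well. For (b) and (c) I would use the alternative description of $U_p$ recorded immediately after Definition~\ref{secStrata}: $U_p$ is the subspace of configurations in $\symw_\lambda(M)$ whose underlying set of particle locations has cardinality at least $k-p$. The key observation is that this cardinality is lower semicontinuous on $\mr{Sym}_k(M)$ --- small perturbations can split collided particles but cannot merge distinct ones --- so $U_p$ is open, and $U_p \setminus U_{p-1}$ is precisely the locus of configurations with underlying cardinality exactly $k-p$, namely $\bigsqcup_{\lambda' \in \mr{col}_p(\lambda)} S_{\lambda'}(M) = \cS_\lambda[p]$. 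Each $S_{\lambda'}(M)$ is a colored configuration space and hence a manifold, so the disjoint union is locally path-connected; moreover the same semicontinuity argument shows that distinct components $S_{\lambda'}(M)$ and $S_{\lambda''}(M)$ inside $\cS_\lambda[p]$ cannot accumulate on one another, so the union is topologically (and not merely set-theoretically) disjoint. For (d), $U_p = \emptyset$ for $p<0$ and $U_p = \symw_\lambda(M)$ once $p \geq k-1$, since every partition of $k$ has at most $k$ parts.

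With (a)--(d) in hand, Proposition~\ref{specOpen} immediately produces the first spectral sequence. The ``first quadrant'' assertion holds because $\cS_\lambda[p] = \emptyset$ for $p < 0$ (giving $p \geq 0$) and compactly supported cohomology vanishes in negative total degree (giving $p+q \geq 0$), while finiteness of the filtration ensures that only finitely many columns are nonzero. The second spectral sequence is obtained by the same argument applied to the filtration $\{\R^d \times U_p\}$ of $\R^d \times \symw_\lambda(M)$: taking the product with $\R^d$ preserves all the topological hypotheses and simply multiplies each filtration difference by $\R^d$. I do not anticipate any substantive obstacle, as the lemma is essentially a direct invocation of Proposition~\ref{specOpen}; the only real content beyond that proposition is the semicontinuity argument identifying the filtration differences, which is the step I would spell out most carefully.
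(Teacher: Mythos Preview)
Your proposal is correct and matches the paper's approach exactly: the paper does not give a separate proof of this lemma but simply records it as a direct application of Proposition~\ref{specOpen} to the filtration $\{U_p\}$ (and $\{\R^d \times U_p\}$), having already noted in the discussion preceding the lemma that the $U_p$ form a finite increasing open filtration with differences $\cS_\lambda[p]$. Your write-up fills in precisely the verifications (local compactness, openness via semicontinuity of the support cardinality, local path-connectedness of the strata, finiteness) that the paper leaves implicit.
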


The next lemma says that stabilization maps $\stab_*:H_c^*(\bR^d \times \syms_{\lambda}(M)) \m H_c^*(\syms_{1\,\lambda}(M))$ induce a map between these spectral sequences.

\begin{lemma} \label{lemStratamap} There is a map between spectral sequences of Lemma \ref{specEven} that on the $E^1$-page is induced by the stabilization maps $\stab_*:H^*_c(\bR^d \times \syms_{\lambda'}(M)) \m H^*_c(\syms_{1\,\lambda'}(M))$
and on the $E^\infty$-page is the associated graded of $\stab_*:H_c^*(\bR^d \times \symw_{\lambda}(M)) \m H_c^*(\symw_{1\,\lambda}(M))$.
\end{lemma}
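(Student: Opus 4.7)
The plan is to apply the naturality statement of \cref{specOpen} directly to the open embedding $\stab \colon \bR^d \times \symw_\lambda(M) \to \symw_{1\,\lambda}(M)$. The excerpt has already observed that $\stab$ is an open embedding, so the task reduces to checking that it is compatible with the open filtrations $U_\bullet$ introduced in \cref{secStrata}, and then reading off what the induced map of spectral sequences looks like on $E^1$ and $E^\infty$.

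For filtration compatibility I would show $\stab(\bR^d \times U_p) \subseteq U_p'$ by showing that $\mu \in \mr{col}_q(\lambda)$ implies $1\,\mu \in \mr{col}_q(1\,\lambda)$. The cardinality condition is immediate from $\mr{card}(1\,\mu) = \mr{card}(\mu) + 1$, so what remains is the combinatorial claim: $\mu \not\leq \lambda$ implies $1\,\mu \not\leq 1\,\lambda$. This is the one step that requires actual argument; I would prove the contrapositive. A collapse $1\,\mu \leq 1\,\lambda$ is witnessed by a set partition of the multiset of parts of $1\,\lambda$ into blocks whose sums are the parts of $1\,\mu$. Tracking the distinguished ``new'' $1$ of $1\,\lambda$, there are two cases. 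If it sits in a singleton block matched to the new $1$ of $1\,\mu$, deleting both yields a collapse $\lambda \to \mu$. Otherwise the block $B$ containing the new $1$ of $1\,\lambda$ has size at least two, and the new $1$ of $1\,\mu$ must be the image of a singleton block $\{1'\}$ where $1'$ is a $1$ already present in $\lambda$; swapping the roles of the new $1$ and $1'$ reduces to the first case. This combinatorial bookkeeping is the only genuine obstacle, and it is elementary.

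With compatibility in hand, the naturality statement in \cref{specOpen} produces a map between the spectral sequences of \cref{specEven}. On $E^1_{p,q} = H^{p+q}_c(\bR^d \times \cS_\lambda[p]) \to H^{p+q}_c(\cS_{1\,\lambda}[p])$, the map is induced by $\stab$ restricted to the filtration differences. Since $\stab$ sends $\bR^d \times \syms_{\lambda'}(M)$ into the summand $\syms_{1\,\lambda'}(M)$ of $\cS_{1\,\lambda}[p]$, the restriction to each path component is precisely the stratum-level stabilization map, and it is zero on the summands of $\cS_{1\,\lambda}[p]$ indexed by partitions not of the form $1\,\lambda'$. On $E^\infty$, the map is the associated graded of $\stab_* \colon H^*_c(\bR^d \times \symw_\lambda(M)) \to H^*_c(\symw_{1\,\lambda}(M))$ on the abutments, by the general fact that a map of exact couples with bounded filtration induces the associated graded map on $E^\infty$-pages.
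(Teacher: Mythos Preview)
Your proof is correct and follows the same approach as the paper, which simply asserts in two lines that the stabilization map is an open embedding compatible with the filtrations and invokes the naturality clause of \cref{specOpen}. You have supplied the details of the filtration-compatibility check via the combinatorics of $\mr{col}_q$; note that the alternative description of $U_p$ given just before \cref{specEven} (configurations with at least $k-p$ distinct particles after forgetting multiplicities) makes this check immediate, since adding a multiplicity-one particle increases the number of distinct particles by one.
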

\begin{proof} Recall that open embeddings induce maps on compactly supported cohomology via extension by zero. The stabilization map is an open embedding compatible with the filtrations used in \cref{specEven}.
\end{proof}

\begin{lemma} \label{lemStratabound} Let $\lambda$ be a partition of $k$. The stabilization map induces a bijection between the sets of path components of $\cS_{1^j\lambda}[p]$ and $\cS_{1^{j+1}\lambda}[p]$ for $p \leq \frac{j + k}{2}$. 
\end{lemma}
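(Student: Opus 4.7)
The plan is to reduce the statement to a purely combinatorial bijection on indexing sets of path components. Because $M$ is connected with $\dim M\geq 2$, each $S_{\lambda'}(M)$ is path-connected, so by the remark immediately following \cref{secStrata} the path components of $\bR^d\times\cS_{1^j\lambda}[p]$ and $\cS_{1^{j+1}\lambda}[p]$ are indexed by $\mr{col}_p(1^j\lambda)$ and $\mr{col}_p(1^{j+1}\lambda)$ respectively. Tracing through \cref{defstab}, the stabilization map takes a configuration with associated partition $\lambda'$ to one with associated partition $1\lambda'$ (obtained by appending a single new particle of multiplicity $1$), so on $\pi_0$ it is induced by the assignment $\lambda'\mapsto 1\lambda'$. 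It therefore suffices to show that this assignment is a bijection $\mr{col}_p(1^j\lambda)\to\mr{col}_p(1^{j+1}\lambda)$ whenever $p\leq(j+k)/2$.

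First I would record a convenient reformulation: since $1^N$ has every part equal to $1$, every partition of $N$ with $N-p$ parts is reachable from $1^N$ by exactly $p$ elementary collapses, so $\mr{col}_p(1^j\lambda)$ is precisely the set of partitions of $k+j$ with $k+j-p$ parts that are \emph{not} collapses of $1^j\lambda$. Injectivity of $\lambda'\mapsto 1\lambda'$ is immediate, since $\lambda'$ can be recovered by deleting one $1$. For well-definedness, suppose $\lambda'\in\mr{col}_p(1^j\lambda)$ and yet $1\lambda'\leq 1^{j+1}\lambda$. A witnessing collapse is a partition of the parts of $1^{j+1}\lambda$ into blocks whose sums give the parts of $1\lambda'$; the block summing to the appended $1$ must consist of a single part of $1^{j+1}\lambda$ equal to $1$, and deleting that part leaves exactly $1^j\lambda$, so the remaining blocks exhibit $\lambda'\leq 1^j\lambda$, a contradiction.

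For surjectivity, given $\mu\in\mr{col}_p(1^{j+1}\lambda)$, I would apply a pigeonhole argument: if every part of $\mu$ were at least $2$, then $k+j+1=|\mu|\geq 2(k+j+1-p)$, forcing $2p\geq k+j+1$ and contradicting the hypothesis $p\leq(j+k)/2$. Hence $\mu$ has a part equal to $1$, and we may write $\mu=1\mu'$, where $\mu'$ is a partition of $k+j$ with $k+j-p$ parts. If $\mu'\leq 1^j\lambda$, then prepending a $1$ to the corresponding block partition would yield $\mu=1\mu'\leq 1^{j+1}\lambda$, a contradiction; so $\mu'\in\mr{col}_p(1^j\lambda)$ and maps to $\mu$, finishing the bijection. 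I do not anticipate any real obstacle here: the entire content is the pigeonhole bound, which is exactly what the numerical hypothesis $p\leq(j+k)/2$ is designed to provide.
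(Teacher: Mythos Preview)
Your proof is correct and follows essentially the same approach as the paper's: both identify the path components with the sets $\mr{col}_p(1^j\lambda)$ and $\mr{col}_p(1^{j+1}\lambda)$, observe that the stabilization map acts by $\lambda'\mapsto 1\lambda'$, and use the pigeonhole bound (a partition of $j+k+1$ into $j+k+1-p$ parts must contain a $1$ once $p\leq (j+k)/2$) to establish surjectivity. Your argument is in fact more careful than the paper's, which asserts injectivity (and implicitly well-definedness) as ``clear'' and does not explicitly verify that the preimage $\mu'$ in the surjectivity step still satisfies $\mu'\not\leq 1^j\lambda$; you spell out both of these via the block-partition description of collapses.
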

\begin{proof} Recall that the path components of $\cS_{1^{j}\lambda}[p]$ and $\cS_{1^{j+1}\lambda}[p]$ are indexed by $\rm{col}_p(1^j \lambda)$ and $\rm{col}_p(1^{j+1} \lambda)$ respectively. The map between these path components induced by the stabilization map is clearly injective for all $i$. It is not surjective only when $\cS_{1^{j+1} \lambda}[p]$ has a component of the form $\syms_{\lambda''}(M)$ where $\lambda''$ is a partition without any $1$'s. On the other hand if $\syms_{\lambda''}(M) \subset \cS_{1^{j+1}\lambda}[p]$ then $\lambda''$ contains at least $j+k + 1 - 2p$ copies of $1$. Therefore, the map of components is surjective when $j + k - 2p \geq 0$ or equivalently $p \leq \frac{j + k}{2}$.
\end{proof}

It is helpful to combine Lemma \ref{lemOpenstratastab} and Lemma \ref{lemStratabound} into a single statement.

\begin{lemma}\label{lemstabrangeopencombined} Let $\lambda$ be a partition of $k$. The map $t_*: H_*(\R^d \times \cS_{1^j \lambda}[p];\bQ) \to  H_*(\cS_{1^{j+1} \lambda}[p];\bQ)$ is an isomorphism in the following ranges: (i) $* \leq k+j-2p$ if $M$ is of dimension $d>2$, (ii) $* < k+j-2p$ if $M$ is of dimension $d=2$, and (iii) $* < (a+1)(k+j-2p)$ if condition $(*)_a$ holds.\end{lemma}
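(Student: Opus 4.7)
The plan is to prove the lemma by decomposing both sides into connected components and applying the two preceding lemmas. Observe that $\cS_{1^j \lambda}[p]$ is a disjoint union $\bigsqcup_{\lambda' \in \mr{col}_p(1^j \lambda)} S_{\lambda'}(M)$ (the strata are pairwise separated, since a small perturbation cannot change the associated partition past the filtration level), and likewise for $\cS_{1^{j+1}\lambda}[p]$. The stabilization map carries $S_{\lambda'}(M)$ into $S_{1\,\lambda'}(M)$. Since rational homology commutes with disjoint unions, the lemma reduces to verifying two things: that the indexing of components matches in the relevant range, and that the map is an isomorphism on each individual stratum up through the advertised degree.

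First I would dispose of the trivial case $j + k - 2p < 0$, in which every stated range is empty, so the claim is vacuous. Assuming $j + k - 2p \geq 0$, i.e.\ $p \leq \frac{j+k}{2}$, \cref{lemStratabound} supplies a bijection $\mr{col}_p(1^j \lambda) \xrightarrow{\cong} \mr{col}_p(1^{j+1}\lambda)$ sending $\lambda' \mapsto 1\,\lambda'$, so every component of the target is hit by a unique component of the source, and it suffices to control the map stratum by stratum.

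Next I would count the $1$'s in an arbitrary $\lambda' \in \mr{col}_p(1^j \lambda)$. Writing $n_i$ for the number of parts of $\lambda'$ equal to $i$, the fact that $\lambda'$ is a collapse of $1^{j+k}$ by $p$ elementary collapses gives $\sum_i n_i = j+k-p$ and $\sum_i i\, n_i = j+k$. Subtracting yields $\sum_{i \geq 2}(i-1)\, n_i = p$, whence $\sum_{i \geq 2} n_i \leq p$ and therefore $n_1 \geq j+k-2p$. Thus every stratum in $\cS_{1^j \lambda}[p]$ has at least $j+k-2p$ ones, which is exactly the quantity needed to feed into \cref{lemOpenstratastab}.

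Applying \cref{lemOpenstratastab} to each $\lambda'$, the map $t_* \colon H_*(\bR^d \times S_{\lambda'}(M); \bQ) \to H_*(S_{1\,\lambda'}(M); \bQ)$ is an isomorphism in degrees $* \leq j+k-2p$, $* < j+k-2p$, or $* < (a+1)(j+k-2p)$, according to whether $M$ falls under case (i), (ii), or (iii). Summing these isomorphisms over the bijection of components produces the claimed ranges for $\cS_{1^j\lambda}[p] \to \cS_{1^{j+1}\lambda}[p]$. There is no real obstacle here beyond the bookkeeping: both of the work-horse statements have already been established, and the only substantive point is the combinatorial lower bound $n_1 \geq j+k-2p$ that guarantees a uniform stability range across all components of the filtration difference.
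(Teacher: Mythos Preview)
Your proof is correct and follows exactly the approach the paper indicates: the paper does not give a standalone argument for this lemma but simply declares it to be the combination of \cref{lemOpenstratastab} and \cref{lemStratabound}, and your write-up spells out precisely that combination. The one piece of substance you add is the explicit count $n_1 \geq j+k-2p$ for the number of $1$'s in any $\lambda' \in \mr{col}_p(1^j\lambda)$, which is the same computation appearing in the paper's proof of \cref{lemStratabound} (``$\lambda''$ contains at least $j+k+1-2p$ copies of $1$''); so there is no meaningful divergence.
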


The following general lemma about spectral sequences follows from the five lemma (see e.g. Remarque 2.10 of \cite{cdg} for a similar argument).

\begin{lemma} \label{lemSSbound} Let $f$ be a map of spectral sequences $\{E^r_{p,q}\} \to \{{'E}^r_{p,q}\}$. If $f:E^1_{p,q} \rightarrow {'E}^1_{p,q}$ is a surjection for $p+q \geq *-1$ and an isomorphism for $p + q \geq *$ then $f: E^\infty_{p,q} \rightarrow {'E}^\infty_{p,q}$ is a surjection for $p+q \geq *-1$ and an isomorphism for $p+q \geq *$.
\end{lemma}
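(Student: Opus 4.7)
The plan is to prove the lemma by induction on the page number $r \geq 1$, taking the statement of the lemma itself as the inductive hypothesis at each page. The base case $r=1$ is the given assumption, and because the filtration in \cref{specOpen} is finite and the spectral sequence is first-quadrant, for each fixed $(p,q)$ the pages stabilize at a finite stage; it therefore suffices to show the property propagates from $E^r$ to $E^{r+1}$.

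For the inductive step I fix $(p,q)$ and study the three-term complex
\[
E^r_{p+r,\,q-r-1} \xrightarrow{\ d_r^{\mr{in}}\ } E^r_{p,q} \xrightarrow{\ d_r^{\mr{out}}\ } E^r_{p-r,\,q+r+1},
\]
whose three entries lie in total degrees $p+q-1$, $p+q$, and $p+q+1$ respectively (since $d_r$ has bidegree $(-r,r+1)$) and whose middle homology is $E^{r+1}_{p,q}$. The analogous complex on the primed side receives a map from this one via $f$.

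When $p+q \geq *$, the inductive hypothesis supplies a surjection at the source and isomorphisms at both the middle and the target. A short diagram chase then shows that $f$ is an isomorphism on $\mr{im}(d_r^{\mr{in}}) \subset E^r_{p,q}$ (using source-surjectivity plus middle-isomorphism) and on $\mr{im}(d_r^{\mr{out}}) \subset E^r_{p-r, q+r+1}$ (using middle-isomorphism plus target-isomorphism). Two applications of the five lemma to the short exact sequences
\[
0 \to \ker(d_r^{\mr{out}}) \to E^r_{p,q} \to \mr{im}(d_r^{\mr{out}}) \to 0 \quad \text{and} \quad 0 \to \mr{im}(d_r^{\mr{in}}) \to \ker(d_r^{\mr{out}}) \to E^{r+1}_{p,q} \to 0
\]
then yield that $f$ is an isomorphism on $E^{r+1}_{p,q}$.

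When only $p+q \geq *-1$, the source term at total degree $p+q-1 \geq *-2$ is no longer controlled, and this is the main mild obstacle. The way around it is to observe that the middle surjection together with the target isomorphism (at degree $\geq *$) already suffice to produce surjectivity on kernels: given $z' \in \ker((d_r^{\mr{out}})')$, lift it to $z \in E^r_{p,q}$ via middle-surjectivity, and note that $d_r^{\mr{out}}(z)$ maps to $(d_r^{\mr{out}})'(z') = 0$; the target isomorphism then forces $d_r^{\mr{out}}(z) = 0$, so $z \in \ker(d_r^{\mr{out}})$. Passing to the quotient, $[z] \in E^{r+1}_{p,q}$ maps to $[z']$, so surjectivity onto $\ker(d_r^{\mr{out}})$ descends to surjectivity onto $E^{r+1}_{p,q}$; crucially, no hypothesis on the uncontrolled source term is ever needed, because $\mr{im}(d_r^{\mr{in}})$ is quotiented out on both sides.
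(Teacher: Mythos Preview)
Your argument is correct and is precisely the five-lemma approach the paper alludes to; you have simply supplied the details the paper omits. One minor remark: the lemma is stated for abstract spectral sequences, so your appeal to \cref{specOpen} to pass from ``all $E^r$'' to $E^\infty$ is, strictly speaking, an appeal to the intended application rather than part of a fully general proof---but this is harmless in context.
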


We now prove \cref{propevenopen}.

\begin{proof}[Proof of \cref{propevenopen}]
Thinking of ${\rm Sym}_k(M)$ as a quotient of $M^k$ by $\fS_k$, we note that we can construct the spaces $\symw_{1^j \lambda}(M)$ as quotients of open submanifolds $\tilde{\symw}_{1^j \lambda}(M)$ of the orientable manifold $M^k$ by finite groups acting via orientation preserving maps (this uses that the dimension of $M$ is even). Thus $\symw_{1^j \lambda}(M)$ satisfies rational Poincar\'{e} duality, as in particular it is the underlying space of an orientable orbifold. Theorem V.9.2 of \cite{bredonsheaf} proves Poincar\'e duality for sheaf (co)homology with coefficients in a local system $\cL$ for $M$ a weak homology manifold with respect to $\cL$. Underlying spaces of orbifolds are always weak homology manifolds with respect to locally constant rational local systems, which are trivial if the orbifold is orientable. By Remark \ref{remarksheaf} we can identify compactly supported sheaf and singular cohomology.

Using rational Poincar\'{e} duality and the fact that it commutes with open embeddings, we see that it suffices to show that the stabilization map \[ \stab_*: H^*_c(\mathbb{R}^{d} \times \symw_{1^j\lambda}(M); \mathbb{Q}) \m H^*_c(\symw_{1^{j+1}\lambda}(M); \mathbb{Q})\] is an isomorphism for $* > \dim(\symw_{1^{j+1}\lambda}(M) ) -  f_{M,k}(j)$ and a surjection for $* = \dim(\symw_{1^{j+1}\lambda}(M)) - f_{M, k}(j)$. By \cref{specEven}, there are spectral sequences
	\begin{align*} {'E}^1_{p,q} &= H_c^{p+q}(\mathbb{R}^{d} \times \cS_{1^j\lambda}[p] ; \mathbb{Q}) \Rightarrow H^{p+q}_c(\mathbb{R}^{d}\times \symw_{1^j \lambda}(M) ;  \mathbb{Q})\\
	{E}^1_{p,q} &= H_c^{p+q}({\cS}_{1^{j+1}\lambda}[p] ; \mathbb{Q}) \Rightarrow H^{p+q}_c(\symw_{1^{j+1} \lambda}(M) ;  \mathbb{Q})\end{align*}
By \cref{lemStratamap}, there is a map of spectral sequences $\stab_{p,q}: {'E}^1_{p,q} \rightarrow {E}^1_{p,q}$ given by stabilization maps, which on $E^\infty$ is the associated graded of the stabilization map $\stab_*:H^*_c(\mathbb{R}^{d} \times \symw_{1^j\lambda}(M); \mathbb{Q}) \m H^*_c(\symw_{1^{j+1}\lambda}(M); \mathbb{Q})$.

First consider the case of manifolds $M$ of dimension $d=2n > 2$. Take $p$ and $q$ with $p+q \geq \dim(\symw_{1^{j+1}\lambda}(M) ) -  f_{M,k}(j)=d(j+k+1)-(j+k)$. To apply Lemma \ref{lemSSbound}, we should prove that the stabilization map \[t_{p,q}: H^{p+q}_c(\bR^{d} \times \cS_{1^j \lambda}[p];\bQ) \to H^{p+q}_c(\cS_{1^{j+1} \lambda}[p];\bQ)\] is an isomorphism or surjection in the desired degrees. Using Poincar\'e duality this is equivalent to the map \[H_{d(j+k+1-p)-p-q}(\R^d \times \cS_{ 1^j \lambda}[p];\bQ) \to H_{d(j+k+1-p)-p-q}(\cS_{1^{j+1} \lambda}[p];\bQ)\] being an isomorphism or surjection in the desired degrees, because the $\cS_{\lambda}(M)$ are orientable manifolds if $d$ is even and $M$ orientable. From Lemma \ref{lemstabrangeopencombined} and the discussion following it, we know that this map on homology is an isomorphism if $d(j+k+1-p)-p-q \leq j+k-2p$, i.e. if $(d-1)p+q \geq d(j+k+1)-(j+k)$. This follows directly from $p+q \geq d(j+k+1)-(j+k)$ since $d-1 \geq 1$. We conclude that the map ${'E}^1_{p,q} \rightarrow {E}^1_{p,q}$ is an isomorphism for $p+q \geq d(j+k+1)-(j+k)$. By \cref{lemSSbound}, the stabilization map \[ \stab_*:H^*_c(\mathbb{R}^d \times W_{1^j \lambda}(M); \mathbb{Q}) \m H^*_c(W_{1^{j+1} \lambda}(M); \mathbb{Q})\] is an isomorphism for $* > \dim(\symw_{1^{j+1}\lambda}(M)) - (j + k)$ and a surjection for $* = \dim(\symw_{1^{j+1}\lambda}(M)) - (j + k)$.

By using the homological stability ranges from Lemma \ref{lemstabrangeopencombined} for manifolds of dimension $2$ or manifolds satisfying condition $(*)_a$, we obtain the other stability ranges.
\end{proof}

\begin{remark}\label{remintegral}
If $\dim M >2$, we are forced to work with rational coefficients because the spaces $W_{1^j \lambda}(M)$ do not have integral \p duality. However, if $\dim M=2$ we can prove stability integrally, because $W_{1^j \lambda}(M)$ is a manifold and hence has integral \p duality. To see that $W_{1^j \lambda}(M)$ is a manifold it suffices to remark it is an open subset of ${\rm Sym}_{j+k}(M)$ and the latter is a manifold because locally it is a product of spaces of the form ${\rm Sym}_{m}(\bC) \cong \bC^m$ using the relationship between the coefficients of a monic complex polynomial and its roots.

After remarking this, almost all of the arguments of this section go through unchanged to prove that $H_*(W_{1^j \lambda}(M);\bZ)$ stabilizes if $\dim M=2$. The one modification needed is the following. Instead of using the results of Church in \cite{Ch} and Randal-Williams in \cite{RW} on rational homological stability for $S_{1^j}(M)$, we use integral homological stability, see e.g. Segal \cite{Se} or Randal-Williams \cite{RW}. In particular, in Proposition A.1 of \cite{Se} Segal proved that $t:S_{1^j }(M) \m S_{1^{j+1}}(M)$ induces an isomorphism on integral homology for $* \leq j/2$. Therefore, we have that $t_*:H_*(W_{1^j \lambda}(M);\bZ) \m H_*(W_{1^{j+1}\lambda}(M);\bZ)$ is an isomorphism for $* \leq  (j+k)/2$ if $\dim M = 2$.

When we consider closed manifolds in Section \ref{secpuncturing}, the use of rational coefficients will also be unavoidable. In fact, from the presentation of the spherical braid group given in \cite{FV}, one sees that $H_1(W_{1^j2}(\C P^1);\bZ)=\bZ/(2j+2)\bZ$. Hence integral homological stability fails for closed manifolds, even in dimension two.

We also note that these techniques show that the spaces $S_{1^j \lambda}(M)$ and $W_{1^j \lambda}(M)$ have stability for appropriately shifted integral compactly supported cohomology provided $M$ is a connected orientable manifold that is the interior of a manifold with non-empty boundary and has even dimension at least two.
\end{remark}

\section{The proof for general open manifolds }\label{secNON}

The goal of this section is to generalize \cref{propevenopen} to the case of connected open manifolds that are not necessarily orientable or even dimensional. If $M$ is not orientable or not even dimensional then $W_\lambda(M)$ is not necessarily orientable. In the previous section, we often invoked Poincar\'e duality, so to make these proofs work we need to appropriately modify our statements to include local systems.  For discussions of local systems see e.g. Chapter 5 of \cite{kirkdavis} or Section I.7 of \cite{bredonsheaf}.

Since $W_\lambda(M)$ is the underlying space of an orbifold, by Theorem V.9.2 of \cite{bredonsheaf}, there is a rational orientation local system $\OO$ such that 
\[H_*(W_\lambda(M);\cL) \cong H_c^{\dim(W_\lambda(M))-*}(W_\lambda(M);\OO \otimes \cL)\] if $\cL$ is any locally constant rational local system of dimension one (note that one can also write $\cO$ on the left hand side, by replacing $\cL$ with $\cO \otimes \cL$ and remarking that $\cO^{-1} \cong \cO$). To compute $H^*_c(W_\lambda(M);\OO)$, we use the filtration from Definition \ref{secStrata}. On the spaces in the filtration and on the filtration differences, we use local systems which are pull backs of $\OO$ under the inclusions into $W_\lambda(M)$. \cref{specEven} generalizes as follows. 

\begin{lemma} \label{specNON} Let $\iota: \cS_{\lambda}[p] \m W_{\lambda}(M)$ be the inclusion map.  There exists a first quadrant spectral sequence converging to $H^{p+q}_c(\symw_{\lambda}(M) ; \OO)$   with $E^1$-page  $E^1_{p,q} = H_c^{p+q}(S_{\lambda}[p] ; \iota^* \OO)$. A similar spectral sequence also exists computing $H^{p+q}_c(\R^d \times \symw_{\lambda}(M) ; \OO')$ with $\OO'$ the orientation local system on $\R^d \times W_{\lambda}(M)$.
\end{lemma}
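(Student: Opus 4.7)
The plan is to apply the local-systems version of \cref{specOpen} directly to the open filtration $\emptyset = U_{-1} \subset U_0 \subset \cdots \subset U_{k-1} = W_\lambda(M)$ of \cref{secStrata}, taking the local system on $W_\lambda(M)$ to be $\OO$. The second clause of \cref{specOpen} already asserts the existence of such a spectral sequence whenever a local system on the total space is specified, so almost all of the work consists of verifying the hypotheses and identifying the coefficient systems on the filtration pieces.

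First I would check the point-set hypotheses. The ambient space $\mr{Sym}_k(M) = M^k/\fS_k$ is the quotient of a locally compact, locally path-connected Hausdorff space by a finite group, hence retains these properties, and they pass to the open subspace $W_\lambda(M)$. The subspaces $U_p$ are open in $W_\lambda(M)$ with $U_p \setminus U_{p-1} = \cS_\lambda[p]$, which is a topological disjoint union of colored configuration spaces $S_{\lambda'}(M)$ over $\lambda' \in \mr{col}_p(\lambda)$. Each $S_{\lambda'}(M)$ is a manifold (it is an open submanifold of $M^{|\lambda'|}$ quotiented by a free action of a product of symmetric groups), so $\cS_\lambda[p]$ is locally path-connected. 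All of these observations are already present in the discussion following \cref{secStrata}.

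Second, I would identify the local systems. Restriction of $\OO$ along the open inclusions $U_p \hookrightarrow W_\lambda(M)$ and along the inclusions $\iota: \cS_\lambda[p] \hookrightarrow W_\lambda(M)$ of the locally closed filtration differences gives exactly $\iota^*\OO$ on each $\cS_\lambda[p]$, which is the coefficient system claimed on the $E^1$-page. Plugging all of this into \cref{specOpen} yields the first spectral sequence converging to $H^{p+q}_c(W_\lambda(M);\OO)$ with $E^1_{p,q} = H^{p+q}_c(\cS_\lambda[p]; \iota^*\OO)$.

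For the second, product, spectral sequence I would apply the same construction to the open filtration $\R^d \times U_p$ of $\R^d \times W_\lambda(M)$, whose filtration differences are $\R^d \times \cS_\lambda[p]$; since $\R^d$ is orientable the orientation local system $\OO'$ of $\R^d \times W_\lambda(M)$ is the pullback of $\OO$ along the projection, and similarly on each filtration stratum. There is essentially no obstacle beyond bookkeeping: the only place where the orientability of $M$ or evenness of $\dim M$ entered \cref{specEven} was in asserting that $\OO$ was trivial (so it could be omitted from notation), and since \cref{specOpen} was already stated with local coefficients, suppressing that hypothesis is the only change required.
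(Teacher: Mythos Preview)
Your proposal is correct and matches the paper's own (implicit) argument: the paper states \cref{specNON} as the direct local-coefficients analogue of \cref{specEven}, which in turn is obtained by applying \cref{specOpen} to the filtration of \cref{secStrata}, and \cref{specOpen} is already stated for arbitrary local systems. Your write-up simply makes explicit the verification of the point-set hypotheses and the identification of the restricted local systems, which the paper leaves to the reader.
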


Using these spectral sequences, we see that to prove homological stability for symmetric complements, it suffices to show that $t_*:H_c^{*}(\R^d \times S_{\lambda}(M) ; ({\rm id} \times \iota)^* \OO') \m H_c^{*}(S_{1\,\lambda}(M) ; \iota^* \OO)$ is an isomorphism in a range. The map $t_*$ is obtained from the natural isomorphism of local coefficients $(id \times \iota)^*(\OO') \m \iota^* \OO$ coming from the fact that $t$ is an open embedding. The spaces $S_{\lambda}(M)$ are manifolds and hence have Poincar\'e duality. Let $\OO_{\lambda}$ denote the orientation local system of $S_{\lambda}(M)$ and $\OO_{\lambda}'$ denote the orientation local system on $\R^d \times S_{\lambda}(M)$, which is isomorphic to the pull back of the orientation local system of $S_{\lambda}(M)$. By Poincar\'e duality for the strata, we see that stability for the groups $H_*(S_{1^j\lambda}(M);\OO_{1^j \lambda} \otimes \iota^*\OO )$ is the relevant generalization of \cref{lemOpenstratastab}. 

As before, there is a natural isomorphism of local coefficient systems between $\OO_{\lambda}' \otimes \iota^*\OO'$ and $t^*(\OO_{1\lambda} \otimes \iota^*\OO )$ coming from the open embedding $t$. This isomorphism and the stabilization map give a homomorphism $t_*: H_*(\R^{d} \times S_{\lambda}(M);\OO_{\lambda}' \otimes \iota^*\OO' ) \to H_*( S_{1\,\lambda}(M);\OO_{1\lambda} \otimes \iota^*\OO )$.

\begin{lemma}\label{lemNONstratastab} Let $\lambda$ be a partition with $i$ $1$'s and let $M$ be a connected manifold of dimension $d$ which is the interior of a manifold with non-empty boundary. The map \[t_*: H_*(\R^{d} \times S_{\lambda}(M);\OO_{\lambda}' \otimes \iota^*\OO' ) \to H_*( S_{1\,\lambda}(M);\OO_{1\lambda} \otimes \iota^*\OO )\] is an isomorphism in the following ranges: (i) $* \leq i$ if $M$ is of dimension $d>2$, (ii) $* < i$ if $M$ is of dimension $d=2$ and $M$ is orientable, (iii) $* \leq i$ if $M$ is of dimension $d=2$ and $M$ is not orientable, and (iv) $* < (a+1)i$ if condition $(*)_a$ holds.\end{lemma}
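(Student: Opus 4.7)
The plan is to adapt the proof of Lemma~\ref{lemOpenstratastab} to twisted coefficients, replacing the Serre spectral sequence comparison for untwisted homology with one for local coefficients. Write $\lambda = 1^i \lambda'$, with $\lambda' = m_1 + \cdots + m_r$ containing no $1$'s, and recall the fibration
\[ S_{1^i}(M \setminus \{r \text{ points}\}) \to S_\lambda(M) \to S_{\lambda'}(M) \]
obtained by forgetting the $1$-labeled particles. Crossing with $\R^d$ and applying the stabilization map yields a comparison with the analogous fibration for $S_{1\lambda}(M)$ that covers, up to homotopy, the identity on the base $S_{\lambda'}(M)$. Comparing the two twisted Serre spectral sequences, it suffices to establish an isomorphism on the twisted homology of the fiber through the stated range.

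The first key step is to identify the restriction of $\OO_\lambda \otimes \iota^*\OO$ to a fiber. Since the projection $S_\lambda(M) \to S_{\lambda'}(M)$ is a submersion, the short exact sequence of vertical and horizontal tangent bundles gives an isomorphism $\OO_\lambda|_{\mathrm{fiber}} \cong \mathfrak{o}_i \otimes \pi^*\OO_{\lambda'}|_{\mathrm{fiber}}$, where $\mathfrak{o}_i$ is the orientation local system of the fiber $C_i(M\setminus\{r \text{ points}\})$. Likewise, $\iota^*\OO$ is itself the pullback of the orbifold orientation of $W_{\lambda}(M)$, and its fiber-restriction matches (via the open embedding $t$) that of $\iota^*\OO$ for $1\lambda$, modulo a local system pulled back from $S_{\lambda'}(M)$ that can be absorbed into the base of the Serre spectral sequence. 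Thus the comparison is reduced to showing that the stabilization map induces an isomorphism
\[ t_*: H_*(\R^d \times C_i(N); \mathfrak{o}_i \otimes \mathcal{L}) \to H_*(C_{i+1}(N); \mathfrak{o}_{i+1} \otimes \mathcal{L}) \]
in the relevant range, where $N = M \setminus \{r \text{ points}\}$ and $\mathcal{L}$ is a fixed one-dimensional local system coming from the base and hence compatible with stabilization.

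Next, I would invoke twisted homological stability for configuration spaces. The semi-simplicial resolution argument of Randal-Williams (Theorem~B and Section~7 of \cite{RW}) and Church's spectral sequence argument, together with Proposition~4.1 of \cite{Ch} under condition $(*)_a$, apply with arbitrary local systems pulled back from $B\fS_i$; the local system $\mathfrak{o}_i$ is of this form, being constructed from the sign representation of $\fS_i$ (keeping track of the effect of permutations on the tangent space to the product) combined with the orientation local system of $N$ on the ordered configuration space. Removing finitely many points from $M$ preserves $(*)_a$, as in Lemma~\ref{lemOpenstratastab}. Injectivity of the stabilization map on twisted homology follows from the twisted version of Lemma~\ref{lemtransferinverse} (the transfer-stabilization compatibility argument uses only that $t$ is an open embedding and so works with any pullback local system). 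Case~(iii), where $M$ is $2$-dimensional and non-orientable, gains the improved non-strict range $* \leq i$ because the relevant local system on $C_i(N)$ becomes the sign-twisted orientation system, for which Randal-Williams's integral range applies.

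The main obstacle I expect is the local-system bookkeeping in the first step: carefully matching $\OO_\lambda \otimes \iota^*\OO|_{\mathrm{fiber}}$ with $\mathfrak{o}_i$ tensored with a pullback from the base, and then verifying that under the stabilization map the residual pullback piece for $\lambda$ and for $1\lambda$ agree so that only the configuration-space piece varies with $i$. Once this identification is made, the spectral sequence comparison and the citation of twisted configuration-space stability close the proof exactly as in the orientable case.
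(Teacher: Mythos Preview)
Your overall strategy---compare twisted Serre spectral sequences for the fibration $S_{1^i}(N) \to S_\lambda(M) \to S_{\lambda'}(M)$---matches the paper's, but the identification of the fiber local system is where the proof actually lives, and your version of it is off. You correctly observe that $\OO_\lambda$ restricts to the fiber orientation system $\mathfrak{o}_i$, but you then treat $\iota^*\OO$ as restricting to something pulled back from the base (hence trivial on the fiber), leaving you with $\mathfrak{o}_i \otimes \cL$ as fiber coefficients. In fact $\iota^*\OO$ also restricts to $\mathfrak{o}_i$: the paper computes the monodromy of both $\OO_\lambda$ and $\iota^*\OO$ explicitly and shows they agree on any loop in which only multiplicity-$1$ particles move, precisely because for such loops the multiplicity-weighting built into $\iota^*\OO$ has no effect. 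Thus $f^*(\OO_\lambda \otimes \iota^*\OO) \cong \mathfrak{o}_i \otimes \mathfrak{o}_i \cong \bQ$ is \emph{trivial}, and the fiber input needed is ordinary untwisted homological stability for $C_i(N)$.

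This matters for the remainder of your argument. Having reduced to $H_*(C_i(N);\mathfrak{o}_i)$, you invoke stability ``with arbitrary local systems pulled back from $B\fS_i$'' and attribute the same ranges as in the untwisted case to \cite{Ch} and \cite{RW}. Those references do not give that: Church's Corollary~3 and Randal-Williams's Theorem~B concern constant coefficients, and the sign-isotypic piece of $H_*(\tilde S_{1^i}(N);\bQ)$ does not in general stabilize in the same range. The paper sidesteps this entirely by establishing triviality of the fiber system, so that only the untwisted results of \cite{Ch}, \cite{RW} (and Theorem~1.3 of \cite{K} for non-orientable surfaces, which is what actually yields case~(iii)) are needed. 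The ``main obstacle'' you flag---the local-system bookkeeping---is exactly where the key cancellation occurs, and getting it right is essentially the whole content of the lemma.
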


\begin{proof} Since $\OO_\lambda'$ is isomorphic to the pull back of the orientation local system on $S_{\lambda}(M)$, the map in the statement of the lemma is isomorphic to 
\[t_*: H_*(S_{\lambda}(M);\OO_{\lambda} \otimes \iota^*\OO ) \to H_*( S_{1\,\lambda}(M);\OO_{1\lambda} \otimes \iota^*\OO ).\]
Consider the fiber sequence \[S_{1^{i}}(M \backslash \{r \text{ points}\}) \to S_{\lambda}(M) \to S_{\lambda'}(M)\] used in the proof of \cref{lemOpenstratastab}. There is a twisted Serre spectral sequence for this fibration converging to the homology of $H_*(S_{\lambda}(M);\OO_{\lambda} \otimes \iota^*\OO )$, see e.g. Theorem 3.2 of \cite{moerdijksvensson} in the case $G = *$. This has $E_2$-page given by the homology of $S_{\lambda'}(M)$ with coefficients in a graded local system $\mathcal F_*$. This graded local system $\mathcal F_*$ is a bundle of graded $\Q$ vector spaces with fibers isomorphic to $H_*(S_{1^{i}}(M \backslash \{r \text{ points}\});f^*( \OO_{\lambda} \otimes \iota^*\OO) )$. Here $f$ is the inclusion of a fiber into the total space. 

We now explain why the local system $f^*( \OO_{\lambda} \otimes \iota^*\OO)$ is in fact trivial. Pick a base point $\sigma_0$ in $S_{\lambda}(M)$ and consider $\gamma \in \pi_1(S_{\lambda}(M),\sigma_0)$. There are two natural maps $d_1,d_2: \pi_1(S_{\lambda}(M),\sigma_0) \m H_1(M;\bZ)$ given by viewing $\gamma$ as a collection of loops in $M$ and adding their classes in $H_1(M;\bZ)$. The first map $d_1$ remembers the multiplicities of the particles while the second one does not. Let $\OO_M: H_1(M;\bZ) \m \bZ_2=\{1,-1\}$ be the monodromy associated to the orientation local system on $M$. Suppose $\lambda$ has $n_1$ 1's, $n_2$ 2's, etc. Let $p_m: \pi_1(S_{\lambda}(M),\sigma_0) \m  \fS_{n_m}$ be the map that remembers the permutations associated to the paths of particles of multiplicity $m$, which requires an additional choice of ordering on the particles in $\sigma_0$ with multiplicity $m$. Let $s_2:\pi_1(S_{\lambda}(M),\sigma_0)  \m \bZ_2$ be given by $s_2(\gamma)=\prod \epsilon(p_m(\gamma))$ with $\epsilon$ the sign homomorphism. Likewise define $s_1$ by the formula $s_1(\gamma)=\prod \epsilon(p_m(\gamma))^m$. The local system $\iota^* \OO$ can be described as the rational local system of dimension one with monodromy around each loop $\gamma$ given by $\OO_M(d_1(\gamma)) s_1(\gamma)^d$. The local system $\OO_{\lambda}$ on the other hand associates to $\gamma$ the number $\OO_M(d_2(\gamma)) s_2(\gamma)^d$. 

These two local systems agree on loops where only particles with odd multiplicities move. Therefore, they agree on the image of the fundamental group of a fiber and so $f^*( \OO_{\lambda} \otimes \iota^*\OO)$ is the trivial local system $\bQ$. Indeed, in the fiber only particles with multiplicity $1$ move.

The rest of the proof follows the pattern of  \cref{lemOpenstratastab}.  The stabilization map induces a map from the Serre spectral sequence for this fibration cross $\R^d$ to the Serre spectral sequence for fibration associated to $S_{1\, \lambda}(M)$. Since the local systems on the fibers are trivial, homological stability for the spaces $S_{1^i}(M)$ and spectral sequence comparison complete the proof. We note that Church's homological stability result only applies to orientable manifolds and Randal-Williams' range of $* \leq i$ only applies in dimensions $> 2$, so for non-orientable surfaces we need to invoke  Theorem 1.3 of \cite{K}.\end{proof}

The rest of the arguments in \cref{evenSec} apply with little modification to give the following proposition.

\begin{proposition}\label{propallguys} Let $M$ be a connected manifold $M$ of dimension at least $2$ that is the interior of a manifold with non-empty boundary. The stabilization map $t_*: H_i(\R^d \times W_{1^j \lambda}(M);\bQ) \to H_i(W_{1^{j+1} \lambda}(M);\bQ)$ is an isomorphism for $i < f_{M,k}(j)$ and a surjection for $i = f_{M,k}(j)$, where $f_{M,k}$ is the function given in Equation \ref{eqnfmlambda}.
\end{proposition}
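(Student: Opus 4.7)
The plan is to follow the argument of \cref{propevenopen} essentially verbatim, but with the orientation local system $\OO$ inserted wherever we previously invoked ordinary Poincar\'e duality. First, since $W_{1^{j+1}\lambda}(M)$ is the underlying space of an orbifold, Theorem V.9.2 of \cite{bredonsheaf} gives a Poincar\'e duality isomorphism
\[H_i(\symw_{1^{j+1}\lambda}(M);\bQ)\cong H^{\dim \symw_{1^{j+1}\lambda}(M)-i}_c(\symw_{1^{j+1}\lambda}(M);\OO),\]
and similarly for $\R^d\times \symw_{1^j\lambda}(M)$ with the orientation local system $\OO'$. The stabilization map is an open embedding, so the natural isomorphism $t^*\OO\cong\OO'$ makes it into a map of $\OO$-twisted compactly supported cohomology. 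Thus it suffices to prove that
\[t^*:H^*_c(\symw_{1^{j+1}\lambda}(M);\OO)\m H^*_c(\R^d\times \symw_{1^j\lambda}(M);\OO')\]
is an isomorphism for $*>\dim \symw_{1^{j+1}\lambda}(M)-f_{M,k}(j)$ and a surjection for $*=\dim\symw_{1^{j+1}\lambda}(M)-f_{M,k}(j)$.

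Next, I would apply the twisted spectral sequences of \cref{specNON} to both $\symw_{1^{j+1}\lambda}(M)$ and $\R^d\times\symw_{1^j\lambda}(M)$, together with the stabilization-induced map of spectral sequences (compatibility of local systems under the open embedding is automatic as in the proof of \cref{lemStratamap}). On the $E^1$-page the terms are $H^{p+q}_c(\cS_\lambda[p];\iota^*\OO)$, and since $\cS_\lambda[p]$ is a disjoint union of strata $S_{\lambda'}(M)$, each of which is a manifold, I would invoke Poincar\'e duality stratum by stratum to rewrite these as twisted homology groups $H_{\dim S_{\lambda'}(M)-p-q}(S_{\lambda'}(M);\OO_{\lambda'}\otimes\iota^*\OO)$. \cref{lemNONstratastab} then supplies the needed homological stability isomorphisms on each stratum, while \cref{lemStratabound} ensures that the bijection of components between $\cS_{1^j\lambda}[p]$ and $\cS_{1^{j+1}\lambda}[p]$ holds in the required range. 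Combining these with a degree count exactly parallel to the one in the proof of \cref{propevenopen}, namely that $(d-1)p+q\geq d(j+k+1)-f_{M,k}(j)$ is forced by $p+q\geq d(j+k+1)-f_{M,k}(j)$ since $d\geq 2$, yields that $t^*:{E}^1_{p,q}\m {'E}^1_{p,q}$ is a surjection (resp.\ isomorphism) in the desired range. \cref{lemSSbound} then transfers this to $E^\infty$, which is the associated graded of the stabilization map in $\OO$-twisted compactly supported cohomology.

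The main obstacle is not really an obstacle but a bookkeeping issue: we have to track that the stabilization map and Poincar\'e duality are compatible with the various local systems involved, in particular that $t^*\OO\cong\OO'$ on $W$-spaces, that $(\mathrm{id}\times\iota)^*\OO'\cong \iota^*\OO$ on strata times $\R^d$, and that the restriction of $\OO_\lambda\otimes\iota^*\OO$ along the stabilization map matches the analogous system on $\R^d\times S_\lambda(M)$; all of these are natural isomorphisms because the maps are open embeddings between orbifolds of matching dimension. Once this is in place, the argument is purely formal: for the three remaining cases of $f_{M,k}$ (surfaces, and condition $(*)_a$) we substitute the corresponding range from \cref{lemNONstratastab} in place of the range $*\leq j+k$ used above, exactly as in the end of the proof of \cref{propevenopen}, to obtain the respective stability ranges predicted by Equation \ref{eqnfmlambda}.
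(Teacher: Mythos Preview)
Your proposal is correct and follows exactly the approach the paper intends: the paper's own proof consists of the single sentence that ``the rest of the arguments in \cref{evenSec} apply with little modification,'' and you have simply spelled out those modifications (inserting the orientation local systems, invoking \cref{specNON} and \cref{lemNONstratastab} in place of their untwisted analogues, and repeating the degree count from \cref{propevenopen}). One small slip: the induced map on compactly supported cohomology goes covariantly along the open embedding $t$, i.e.\ $t_*:H^*_c(\R^d\times \symw_{1^j\lambda}(M);\OO')\to H^*_c(\symw_{1^{j+1}\lambda}(M);\OO)$, so your arrows (and the labels $E$, ${'E}$) are reversed, but this does not affect the argument.
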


\section{The proof for closed manifolds by puncturing}\label{secpuncturing} In this section, we prove homological stability for the spaces $W_{1^j \lambda}(M)$ for $M$ closed. One cannot define stabilization maps for closed manifolds as there is no way to add an extra particle. Instead we use the so-called transfer map. Our proof is similar to that used by Randal-Williams in Section 9 of \cite{RW} to leverage homological stability for configuration spaces of particles in open manifolds to prove homological stability for configuration spaces of particles in closed manifolds. We will first recall the definition of the transfer map $ \tau: H_*(W_{1^{j+1} \lambda}(M);\Q) \to H_*(W_{1^{j} \lambda}(M);\Q)$. We then prove that the transfer map induces an isomorphism in the same range as the stabilization map when the manifold is open. Using an augmented semisimplicial space, we describe a spectral sequence computing $H_*(W_{1^{j} \lambda}(M);\bQ) $ in terms of $H_*(W_{1^{j} \lambda}(N);\bQ) $ with $N$ equal to $M$ minus a finite number of points. The transfer map will induce a map of spectral sequences of this form. The theorem will follow by comparing the spectral sequence for $W_{1^{j} \lambda}(M)$ with the one for $W_{1^{j+1} \lambda}(M)$.

Let $\lambda$ be a partition of $k$. Let $\tilde{W}_{\lambda}(M)$ and $\tilde{S}_{\lambda}(M)$ be ordered versions of the symmetric complement and stratum. That is, these spaces are defined as the inverse image of the projection $M^k \m {\rm Sym}_k(M)$ of the spaces $W_{\lambda}(M)$ and $S_{\lambda}(M)$. For $i \leq j$, let $\mr{del}_{i,j} : \tilde W_{1^j \lambda}(M) \m \tilde W_{1^i \lambda}(M)$ be the map which deletes the last $j-i$ particles of $\tilde W_{1^j \lambda}(M)$. This makes sense since the particles of $\tilde W_{1^j \lambda}(M)$ are ordered and $\lambda' \nleq 1^j\lambda$ implies $\lambda' \backslash (\text{any $j-i$ elements}) \nleq 1^i\lambda$. The map $\mr{del}_{i,j}$ is $\fS_{i+k}$-equivariant. Here $\fS_{i+k}$ acts on $ \tilde W_{1^j \lambda}(M) $ via the inclusion of $\fS_{i+k}$ into $\fS_{j+k}$ induced by the standard inclusion $\fS_i$ into $\fS_j$. Thus it induces a map \[(\mr{del}_{i,j})_*:H_*(\tilde W_{1^{j} \lambda}(M);\bQ)_{\fS_{i+k}} \to H_*(\tilde W_{1^{i} \lambda}(M);\bQ)_{\fS_{i+k}}.\]
Here $V_G$ denotes the coinvariants of a rational representation $V$ of a finite group $G$, defined as $V \otimes_{\bQ[G]} \bQ$ where $G$ acts trivially on $\bQ$. If $X$ is a $G$-space, we have an isomorphism $H_*(X/G;\bQ) \cong H(X;\bQ)_G$. By viewing $\fS_{i+k}$ as a subgroup of $\fS_{j+k}$, for any $\fS_{j+k}$-module $V$ there is a map 
\[V_{\fS_{j+k}} = V \otimes_{\bQ[\fS_{j+k}]} \bQ \to V \otimes_{\bQ[\fS_{j+k}]} \bQ[\fS_{j+k}]/\bQ[\fS_{i+k}] \cong V \otimes_{\bQ[\fS_{i+k}]} \bQ = V_{\fS_{i+k}}\]
induced by the map $\bQ \to \bQ[\fS_{j+k}]/\bQ[\fS_{i+k}]$ sending $1$ to the sum of representatives for the cosets. This construction gives us a map 
\[\iota: H_*(\tilde W_{1^{j} \lambda}(M);\bQ)_{\fS_{j+k}} \to H_*(\tilde W_{1^{j} \lambda}(M);\bQ)_{\fS_{i+k}}\]
 which we can use to define the transfer map. 

\begin{definition}\label{deftransfer}
The transfer map $\tau_{i,j}: H_*(W_{1^{j} \lambda}(M);\bQ) \to H_*(W_{1^{i} \lambda}(M);\bQ)$ is defined as \[ (\mr{del}_{i,j})_* \circ  \iota: H_*(\tilde W_{1^{j} \lambda}(M);\bQ)_{\fS_{j+k}} \m H_*(\tilde W_{1^{i} \lambda}(M);\bQ)_{\fS_{i+k}} \] postcomposed and precomposed with the natural isomorphisms $H_*(W_{1^{i} \lambda}(M);\bQ) \cong H_*(\tilde W_{1^{i} \lambda}(M);\bQ)_{\fS_{i+k}} $ and $H_*(W_{1^{j} \lambda}(M);\bQ) \cong H_*(\tilde W_{1^{j} \lambda}(M);\bQ)_{\fS_{j+k}} $ respectively.

\end{definition}

For $i = j-1$, we denote $\tau_{i,j}$ by $\tau$.  Since we will not use compactly supported cohomology in this section, we drop the factor of $\R^d$ from the domains of stabilization maps. We now show that $\tau$ induces a rational homology equivalence in a range for $M$ open by proving it is an inverse to the stabilization map in the stable range up to an automorphism.

\begin{lemma}\label{lemtransferinverse} Let $M$ be the interior of a connected manifold of dimension at least $2$ with non-empty boundary. The stabilization map induces injections $t_*: H_*(W_{1^{j} \lambda}(M);\bQ) \m  H_*(W_{1^{j+1} \lambda}(M);\bQ) $ in all degrees. The transfer map $\tau: H_*(W_{1^{j+1} \lambda}(M);\bQ) \to H_*(W_{1^{j} \lambda}(M);\bQ)$ is an isomorphism in the range $* \leq  f_{M,k}(j)$. Moreover,  in all homological degrees, we have that $\tau \circ t_*$ is an isomorphism.

\end{lemma}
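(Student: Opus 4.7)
The heart of the lemma is the claim that $\tau \circ t_*$ is an isomorphism on $H_*(W_{1^j\lambda}(M);\bQ)$ in every degree; once that is known, injectivity of $t_*$ is automatic, and the stated range for $\tau$ follows by writing $\tau = (\tau \circ t_*) \circ t_*^{-1}$ and invoking Proposition~\ref{propallguys}. I would start by unpacking $\tau \circ t_*$ at the level of $\fS_{j+k}$-coinvariants. Writing $t_* = \pi \circ \tilde t_*$ and $\tau = \mr{del}_* \circ \iota$ and using the fact that $\mr{del}_{j,j+1}\circ \tilde t = \mr{id}$ on the nose, the composite $\iota \circ \pi$ is the norm map for $\fS_{j+k} \subset \fS_{j+k+1}$. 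Expanding it on the coset representatives $e,(1,j+k+1),\ldots,(j+k,j+k+1)$ yields the chain-level formula
\[\tau \circ t_*[y]_{\fS_{j+k}} = [y]_{\fS_{j+k}} + \sum_{\ell=1}^{j+k} [r_\ell(y)]_{\fS_{j+k}},\]
where $r_\ell \colon \tilde W_{1^j\lambda}(M) \to \tilde W_{1^j\lambda}(M)$ replaces the particle at position $\ell$ by the new particle $z$. The same combinatorial argument on collapses that was used for $\mr{del}_{i,j}$ shows $r_\ell$ is well-defined.

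The next step is to compare $\tau \circ t_*$ with the spectral sequence of Lemma~\ref{specNON} converging to $H^*_c(W_{1^j\lambda}(M);\OO)$. The map $\tau \circ t_*$ preserves the open filtration $U_p$: for $y\in S_{\lambda'}\subset \cS_{1^j\lambda}[p]$ each contribution $[r_\ell y]$ either stays in $S_{\lambda'}$ (when $r_\ell$ replaces one of the $1$-particles of $y$) or lands in some $S_{\lambda''}\subset \cS_{1^j\lambda}[p-1]\subset U_{p-1}$ for $\lambda''$ obtained from $\lambda'$ by breaking off a $1$ from a part of size $\geq 2$. Consequently, on the associated graded
\[E^1_{p,q} = H^{p+q}_c(\cS_{1^j\lambda}[p];\iota^*\OO) = \bigoplus_{\lambda' \in \mr{col}_p(1^j\lambda)} H^{p+q}_c(S_{\lambda'};\iota^*\OO),\]
the induced map is diagonal in the stratum decomposition.

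To identify the diagonal action on each summand, I would use the fibration $C_{n_1(\lambda')}(M \setminus r\text{ pts}) \to S_{\lambda'}(M) \to S_{\lambda'_{\geq 2}}(M)$ from the proof of Lemma~\ref{lemOpenstratastab}, where $n_1(\lambda')$ is the number of $1$-parts of $\lambda'$ and $r$ the number of parts $\geq 2$. The maps $t$ and $\tau$ cover the identity on the base and restrict on the fiber to the usual stabilization and transfer on configuration spaces. Section~7 of \cite{RW}, using Lemma~2.2 of \cite{Do}, shows that $\tau \circ t_* = (n+1)\cdot \mr{id}$ on $H_*(C_n(N);\bQ)$ for any open manifold $N$, in all degrees rather than merely within a stable range, with an analogous statement for the restriction of a twisted coefficient system. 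Serre spectral sequence comparison then shows $\tau\circ t_*$ acts as multiplication by $n_1(\lambda')+1$ on $H_*(S_{\lambda'};\OO_{\lambda'}\otimes \iota^*\OO)$, and Poincar\'e duality for the orientable orbifold $S_{\lambda'}$ transports this to multiplication by the same nonzero scalar on $H^{p+q}_c(S_{\lambda'};\iota^*\OO)$.

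It follows that $\tau \circ t_*$ is an isomorphism on $E^1$ in every bidegree, hence on $E^\infty$, hence on $H^*_c(W_{1^j\lambda}(M);\OO)$, and hence, by Poincar\'e duality for the orbifold $W_{1^j\lambda}(M)$ as used in Proposition~\ref{propevenopen}, on $H_*(W_{1^j\lambda}(M);\bQ)$ in every degree. The main obstacle is the configuration-space input: that $\tau \circ t_*$ equals $(n+1)\cdot \mr{id}$ in \emph{all} degrees on $H_*(C_n(N);\bQ)$, not just within a stable range. Geometrically this reflects the fact that $r_\ell$ is homotopic to the identity by sliding the replaced particle to $z$, but making this into a chain-level identity $\sum_\ell r_{\ell *} = n\cdot \mr{id}$ requires the Dold-style argument from \cite{RW}; everything else is formal bookkeeping with the stratum spectral sequence and Poincar\'e duality.
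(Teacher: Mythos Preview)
Your approach differs from the paper's and contains one genuine error and one gap that would need real work to close.

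\textbf{The error.} It is not true that $\tau \circ t_* = (n+1)\cdot\mr{id}$ on $H_*(C_n(N);\bQ)$. On $H_1(C_2(\bR^2);\bQ) \cong \bQ$, your own formula gives $\tau\circ t_* = \mr{id} + (r_1)_* + (r_2)_*$, and each $r_\ell$ factors through $C_1(\bR^2)$, which has trivial $H_1$; hence $\tau\circ t_* = \mr{id}$, not $3\cdot\mr{id}$. What Section~7 of \cite{RW} and Lemma~2.2 of \cite{Do} actually establish is only that $\tau\circ t_*$ is an \emph{isomorphism}: under the Dold splitting $H_*(C_n) \cong \bigoplus_{q \leq n} H_*(C_q)/\mr{im}(t_*)$ it is diagonal with entries $n+1-q$, which are nonzero but not all equal. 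Your Serre spectral sequence comparison on $S_{\lambda'}$ survives this correction, since it only needs the fiberwise map to be an isomorphism.

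\textbf{The gap.} The spectral sequence of Lemma~\ref{specNON} is for compactly supported cohomology, with maps induced by open embeddings. The $r_\ell$ are neither proper nor injective, so you have not explained why $\tau\circ t_*$ acts on that spectral sequence at all. One could instead filter singular chains by the subcomplexes $C_*(U_p;\bQ)$ and use the resulting homology spectral sequence with $E^1_{p,q} = H_{p+q}(U_p,U_{p-1};\bQ)$, which the $r_\ell$ visibly respect. But then one must identify $H_*(U_p,U_{p-1};\bQ)$ with the twisted stratum homology and verify that the induced map on $E^1$ agrees with the stratum-wise $\tau\circ t_*$ under that identification; neither step is automatic.

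\textbf{Comparison with the paper.} The paper bypasses the stratification entirely and applies the Dold argument \emph{directly} to the sequence $B_p = H_i(W_{1^{p-k}\lambda}(M);\bQ)$, extended down to $p=0$ by pulling back along iterated stabilization maps. The relation $\theta_{q,p}\circ\sigma_p = \theta_{q,p-1} + \sigma_q\circ\theta_{q-1,p-1}$ holds for these spaces by the same chain-level identity you wrote down, and Lemma~2.2 of \cite{Do} then gives a splitting $B_{p-1} \cong \bigoplus_{q\leq p-1} B_q/\mr{im}(\sigma_q)$ under which $\theta_{p-1,p}\circ\sigma_p$ is diagonal with nonzero rational entries $p-q$. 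In short, the configuration-space input you wanted to borrow applies verbatim to the $W$-spaces themselves, so the stratum spectral sequence detour is unnecessary.
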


\begin{proof}Suppose that $\lambda$ is a partition of $k$, then for $-k \leq j \leq 0$ we let $W_{1^j \lambda}(M)$ be the inverse image of $W_{\lambda}(M)$ in $\mr{Sym}_{k+j}(M)$ under $t^{-j}$. Fix $i \geq 0$ and set $B_j = H_i(W_{1^{j-k} \lambda}(M);\bQ)$. We then define $\sigma_j: B_{j-1} \to B_j$ to be the map on homology induced by the stabilization map for $j \geq 1$ and $\sigma_0$ to be $0 \to B_0$. For $p,q \geq k$ the transfer as defined above gives maps $\theta_{q,p}: B_p \to B_q$. The construction of the transfer map can be extended to the remaining cases by restricting the maps $\mr{del}_{i,j}$ to the spaces $B_r$ for $r \leq k$. These satisfy $\theta_{q,p} \circ \sigma_{p-1} = \theta_{q,p-1} + \sigma_{p-1} \circ \theta_{q-1,p-1}$ and $\theta_{p,p} = \mr{id}$. Let $\pi_q$ be the projection $B_q \to B_q/\mr{im}(\sigma_q)$.  By Lemma 2.2 of \cite{Do}, the map  
\[\oplus_{q \leq p} \pi_q \circ \theta_{q,p}: B_p \to \bigoplus_{0 \leq q \leq p} B_q/\mr{im}(\sigma_q)\]
is an isomorphism. In particular, the projection to the $q$-summand of $B_{p-1}$ is given by the map $\pi_q \circ \theta_{q,p-1}$. We now compute the composition of $\theta_{p-1,p} \circ \sigma_p$ with one of these projection maps. To do that, we start by remarking that $\theta_{m,m+1} \circ \ldots \circ \theta_{p-1,p} = (p-m)! \theta_{m,p}$ and thus rationally ${p-q \choose p-m} \theta_{q,p} = \theta_{q,m} \circ \theta_{m,p}$. Next we write
\begin{align*}\pi_q \circ \theta_{q,p-1} \circ \theta_{p-1,p} \circ \sigma_p &= (p-q) \pi_q \circ \theta_{q,p} \circ \sigma_p \\
&= (p-q) \pi_q \circ (\theta_{q,p-1} + \sigma_q \circ \theta_{q-1,p-1}) \\ 
&= (p-q) \pi_q \circ \theta_{q,p-1}\end{align*}

From this we conclude that $\theta_{p-1,p} \circ \sigma_p$ is given by $\oplus_{0 \leq q \leq p} (p-q)$ under the isomorphism $B_p \to \bigoplus_{q \leq p} B_q/\mr{im}(\sigma_q)$. Since we are working rationally, multiplication by a non-zero integer is an isomorphism. Thus we can conclude that $\tau \circ t_*$ is an isomorphism and $t_*$ is an injection. Similarly we conclude that $\theta_{p-1,p}$ is an isomorphism when $\sigma_p$ is. Specializing to $p = k+j+1$ and applying Proposition \ref{propallguys} gives the desired result.
\end{proof}

Using Proposition \ref{propallguys} and Lemma \ref{lemtransferinverse}, we can conclude that $t_*: H_*(W_{1^{j} \lambda}(M);\bQ) \m  H_*(W_{1^{j+1} \lambda}(M);\bQ) $ is an isomorphism on homology for $* =  f_{M,k}(j)$, where $f_{M,k}$ is the function given in Equation \ref{eqnfmlambda}.

Recall that a semisimplicial object is defined as a simplicial object without the data of degeneracy maps. We now describe a semisimplicial space $\tilde{\cW}_\bullet(\lambda)$ with augmentation to $\tilde{W}_{\lambda}(M)$. 

\begin{definition}
The space of $p$-simplices of  $\tilde{\cW}_\bullet(\lambda)$ is given by 
\[\tilde{ \cW_p}(\lambda) = \bigsqcup_{\{m_0,\ldots,m_p\} \in {\tilde S_{1^{p+1}}(M)}} \tilde{W}_{\lambda} (M \backslash \{m_0,\ldots,m_p\})\] The $i$th face map is induced by the inclusion 
\[M \backslash \{m_0,\ldots,m_p\} \m M \backslash \{m_0,\ldots, \hat{m_i}, \ldots, m_p\}\]
where $\hat{m_i}$ indicates that we are omitting the $i$th point. 
\end{definition}

The above construction works equally well for $p=-1$, so that $\tilde \cW_{-1}(\lambda) =\tilde W_{\lambda}(M)$, and making  $\tilde{\cW}_\bullet(\lambda)$ an augmented semisimplicial space. We will show that the augmentation map induces a weak equivalence $||\tilde{\cW}_\bullet(\lambda)|| \to \tilde{W}_{ \lambda}(M)$ with $||\cdot|| $ denoting thick geometric realization. As is customary, we will call $\tilde{\cW}_\bullet(\lambda)$ a resolution of $\tilde W_\lambda(M)$. This resolution is useful as $M \backslash \{m_0,\ldots,m_p\}$ is an open manifold and so we will be able to apply Lemma \ref{lemtransferinverse} levelwise to a semisimplicial chain complex constructed from $\tilde{\cW}_\bullet(\lambda)$. 

To prove that the augmentation is a weak equivalence, we will first recall the definition of a microfibration from  \cite{weissclassify} and the definition of a flag set. We are interested in these definitions since every microfibration with weakly contractible fibers is a weak equivalence and there is an easily checked condition for the contractibility of the geometric realization of a flag set.

\begin{definition}A map $f: E \to B$ is called a microfibration if for $m \geq 0$ and each commutative diagram 
\[\xymatrix{\{0\} \times D^m \ar[r] \ar[d] & E \ar[d] \\
[0,1] \times D^m \ar[r] &B}\]
there exists an $\epsilon \in (0,1]$ and a partial lift $[0,\epsilon] \times D^m \to E$ making the resulting diagram commute.\end{definition}

The following proposition was proven by Weiss in Lemma 2.2 of \cite{weissclassify}.

\begin{proposition}
 A microfibration with weakly contractible fibers is a weak equivalence.
\end{proposition}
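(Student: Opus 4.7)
The plan is to show that $p : E \to B$ satisfies the lifting-up-to-homotopy property against the disk pairs $(D^n, S^{n-1})$ for all $n \geq 0$: every commutative square with $g : S^{n-1} \to E$ and $h : D^n \to B$ satisfying $pg = h|_{S^{n-1}}$ admits a lift $\tilde h : D^n \to E$ with $\tilde h|_{S^{n-1}} = g$ and $p\tilde h \simeq h$ rel $S^{n-1}$. This property for all $n$ is a standard characterization of weak homotopy equivalences.

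For $n = 0$ it suffices to pick any point of the fiber over $h(*)$, which is nonempty because it is weakly contractible. For $n \geq 1$, I would parametrize $D^n$ as the cone $CS^{n-1} = (S^{n-1} \times [0,1])/(S^{n-1} \times \{1\})$, with $S^{n-1} \times \{0\}$ the boundary and the image of $S^{n-1}\times \{1\}$ the cone point. Then $h$ is a homotopy from $pg$ (at $t=0$) to the constant map at $h(\text{cone point})$ (at $t = 1$). First I would apply the microfibration property to the lift $g$ to obtain a partial lift on $S^{n-1} \times [0, \epsilon_0]$ for some $\epsilon_0 > 0$; iterating the construction from the endpoint of the previous partial lift and using compactness of $[0,1]$, I would cover $S^{n-1} \times [0, 1-\eta]$ for arbitrarily small $\eta > 0$ by a finite union of microfibration lifts, patched into a single continuous lift. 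Finally, I would use the weak contractibility of the fiber $F$ over the cone point to contract the restriction of the resulting lift at $t = 1-\eta$ into $F$; the corresponding null-homotopy, viewed as a map from a small neighborhood of the cone point, provides the extension across the cone point and thus completes $\tilde h$, at the cost of modifying $h$ by a homotopy rel $S^{n-1}$.

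The main obstacle is making the patching step precise: the $\epsilon_i$ produced by successive applications of the microfibration property might shrink, and two local lifts need not agree on overlaps. The standard way around this is a compactness argument: for each $t_0 \in [0,1]$ and each candidate partial lift at $t_0$, the microfibration property produces a lift on $[t_0, t_0 + \epsilon(t_0)]$, the intervals $(t_0 - \epsilon(t_0), t_0 + \epsilon(t_0))$ cover $[0,1]$, and a finite subcover lets one build the lift inductively by restricting each successive partial lift to begin where the previous ended. A conceptually cleaner route, which is the approach in Weiss's \cite{weissclassify}, is to prove directly that for a microfibration the inclusion of each actual fiber into the corresponding homotopy fiber is a weak equivalence; combined with the hypothesis that fibers are weakly contractible, this shows the homotopy fibers of $p$ are weakly contractible, which is equivalent to $p$ being a weak equivalence via the long exact sequence of the homotopy fiber sequence.
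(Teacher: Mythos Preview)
The paper does not give its own proof of this proposition; it simply cites Lemma~2.2 of \cite{weissclassify}. So there is no proof in the paper to compare against, only Weiss's original argument.

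Your main sketch has a genuine gap in the patching step, and the ``standard compactness argument'' you propose does not close it. The microfibration property gives you an $\epsilon$ that depends on the \emph{entire} commutative square, in particular on the initial lift $S^{n-1}\times\{t_0\}\to E$. But at the moment you try to choose the cover $\{(t_0-\epsilon(t_0),t_0+\epsilon(t_0))\}$ you do not yet have a lift at $t_0$; that lift only exists once you have already built the partial lift up to $t_0$. So $\epsilon(t_0)$ is not a function of $t_0$ alone, and there is no cover of $[0,1]$ to extract a finite subcover from. Equivalently, the naive inductive scheme can suffer a Zeno phenomenon: $\epsilon_0+\epsilon_1+\cdots$ may converge to some $s<1$, and there is no mechanism to produce a lift at $s$ from the lifts on $[0,s)$, since microfibrations have no guaranteed limit behaviour.

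Your final paragraph correctly identifies Weiss's actual route: one shows directly that the inclusion of each fiber into the corresponding homotopy fiber is a weak equivalence, and then the long exact sequence finishes the argument. That is the argument you should carry out (or cite); the iterative lifting sketch, as written, cannot be completed without substantially new ideas.
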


We now define flag sets, a type of semisimplicial set where $p$-simplices are defined by their ordered sets of vertices. 


\begin{definition}
A semisimplicial set $X_\bullet$ is said to be a flag set if the natural map $X_p \to X_0^{p+1}$ sending a $p$-simplex to its vertices is an injection and if an ordered $(p+1)$-tuple $(v_0,\ldots,v_p)$ forms a $p$-simplex if and only if $(v_i,v_j)$ forms a $1$-simplex for all $i \neq j$.
\end{definition}

\begin{lemma}\label{lemflagcontr} Let $X_\bullet$ be a flag set such that for each finite collection $\{v_1,\ldots,v_N\}$ of $0$-simplices there exists a $0$-simplex $v$ such that $(v_i,v)$ is a $1$-simplex for all $i$. Then $||X_\bullet||$ is weakly contractible.\end{lemma}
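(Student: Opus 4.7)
The plan is to show that every continuous map $f : S^n \to \|X_\bullet\|$ extends to the disk $D^{n+1}$, by coning off its image using a vertex supplied by the hypothesis. Since $\|X_\bullet\|$ carries a CW structure whose open cells are in bijection with the non-degenerate simplices (which for a flag set are all simplices), the image $f(S^n)$ is contained in a finite subcomplex $Y \subseteq \|X_\bullet\|$. Let $\{v_1,\dots,v_N\} \subseteq X_0$ be the finite set of vertices of $Y$.

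By hypothesis, there exists a $0$-simplex $v$ such that $(v_i,v)$ is a $1$-simplex for every $i$. I would then invoke the flag condition to check that for every $p$-simplex $(v_{i_0},\dots,v_{i_p})$ of $Y$, the ordered $(p+2)$-tuple $(v_{i_0},\dots,v_{i_p},v)$ is a $(p+1)$-simplex of $X_\bullet$: every pair of its vertices either lies in $Y$ (hence forms a $1$-simplex) or involves $v$ paired with some $v_{i_\ell}$ (which is a $1$-simplex by the choice of $v$), and the flag condition assembles these pairs into a genuine higher simplex. Using the face maps that omit the last vertex, these new simplices glue together with $Y$ to produce a subcomplex $CY \subseteq \|X_\bullet\|$ which is the simplicial cone on $Y$ with apex $v$. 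Pulling back along the collapse $D^{n+1} \to CS^n$, I obtain an extension of $f$ to the disk.

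Since $n$ was arbitrary (and the case $n=0$ just asserts that $\|X_\bullet\|$ is path-connected, handled by the same argument applied to a pair of $0$-simplices), this proves that all homotopy groups of $\|X_\bullet\|$ vanish, so $\|X_\bullet\|$ is weakly contractible.

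I do not expect a serious obstacle here: the only thing that requires a moment's care is that we are working with the thick realization $\|\cdot\|$, but because $X_\bullet$ is a flag set all of its simplices are non-degenerate and the thick realization agrees with the geometric realization of the ordered simplicial complex whose simplices are the $p$-tuples appearing in $X_p$. Under this identification, the coning construction above is the standard fact that the geometric realization of a simplicial complex with a cone point is contractible.
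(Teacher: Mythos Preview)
Your proof is correct and follows essentially the same route as the paper's: find a finite sub-semisimplicial set containing (a representative of) the sphere, then cone it off using the vertex $v$ supplied by the hypothesis and the flag condition. The only difference is cosmetic: the paper first invokes simplicial approximation to replace $f$ by a simplicial map before identifying the finite subcomplex, whereas you appeal directly to compactness of $S^n$ in the CW structure on $\|X_\bullet\|$; both arrive at the same join $\|X'_\bullet\| * \{v\} \subset \|X_\bullet\|$ and the same coning extension.
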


\begin{proof}Let $f: S^i \to ||X_\bullet||$ be an arbitrary continuous map. By simplicial approximation, we can homotope $f$ to a map $g$ which is simplicial with respect to some PL-triangulation of $S^i$. Note that the image of $g$ is contained in the geometric realization of a finite subsemisimplicial set $X'_\bullet$ of $X_\bullet$ spanned by some set of $0$-simplices $\{v_1,\ldots,v_N\}$. By hypothesis the join $||X'_\bullet|| * \{v\}$ is a subcomplex of $||X_\bullet||$ and thus we can extend the map $g$ to a map $\mr{Cone}(S^i) \to ||X_\bullet||$ by sending the cone point to $v$.\end{proof}

We now prove that the augmentation map of our resolution is a weak equivalence. 

\begin{proposition}
The augmentation induces a weak equivalence $||\tilde{\cW}_\bullet(\lambda)|| \to \tilde{W}_{ \lambda}(M)$.
\end{proposition}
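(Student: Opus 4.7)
The plan is to verify the two hypotheses of Weiss's criterion stated just above the definition of flag sets — namely that a microfibration with weakly contractible fibers is a weak equivalence — for the augmentation $\epsilon: ||\tilde{\cW}_\bullet(\lambda)|| \to \tilde{W}_\lambda(M)$.

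\emph{Step 1: The augmentation is a microfibration.} Given a lifting problem consisting of a homotopy $H:[0,1]\times D^m \to \tilde{W}_\lambda(M)$ and initial lift $\bar{H}_0:\{0\}\times D^m \to ||\tilde{\cW}_\bullet(\lambda)||$, I will construct a partial lift by keeping the auxiliary tuples of distinct points fixed while letting the underlying $\tilde{W}_\lambda$-configuration flow along $H$. Locally $\bar{H}_0$ is represented by continuous data $\sigma(d)=(H(0,d),(m_0(d),\ldots,m_p(d)))\in \tilde{\cW}_p(\lambda)$ together with $u(d)\in \mr{int}(\Delta^p)$. Since the disjointness of $\{m_i(d)\}$ from $|H(t,d)|$ is an open condition, continuity of $H$ and compactness of $D^m$ give a uniform $\epsilon>0$ for which $(H(t,d),(m_0(d),\ldots,m_p(d)))$ stays in $\tilde{\cW}_p(\lambda)$ for all $(t,d)\in[0,\epsilon]\times D^m$. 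The face-map identifications in the thick realization ensure that the local lifts built on different coordinate patches of $D^m$ glue into a continuous partial lift.

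\emph{Step 2: The strict fiber over $\xi$ is weakly contractible.} Let $N=M\setminus |\xi|$, which is a non-empty open manifold since $|\xi|$ is a finite set and $\dim M\geq 2$. The strict fiber $\epsilon^{-1}(\xi)$ is canonically the thick realization $||\tilde{S}_{1^{\bullet+1}}(N)||$ of the semisimplicial space $p\mapsto \tilde{S}_{1^{p+1}}(N)$ with face maps given by deleting points. Let $X_\bullet$ be the flag semisimplicial set with $X_p$ the underlying set of $\tilde{S}_{1^{p+1}}(N)$ equipped with the discrete topology; since $N$ is infinite, every finite subset of $X_0=N$ can be extended by one more point, so Lemma \ref{lemflagcontr} applies and $||X_\bullet||$ is weakly contractible. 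The identity on simplex sets is continuous from discrete to the manifold topology, so it induces a continuous map $||X_\bullet||\to \epsilon^{-1}(\xi)$. Any continuous map $f:S^k\to \epsilon^{-1}(\xi)$ has compact image meeting only finitely many cells $\{\sigma\}\times\Delta^p$; a simplicial-approximation-style argument on each cell, combined with the flag structure, allows $f$ to be homotoped to factor through a finite subcomplex of $||X_\bullet||$, where it is null-homotopic by contractibility.

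Combining the two steps with Weiss's criterion yields the weak equivalence. The \textbf{main obstacle} is the fiber contractibility in Step 2, specifically the comparison with the flag set $X_\bullet$: one must ensure that spheres mapping into the topologized thick realization can be compressed through the discrete model, which requires carefully leveraging compactness of the image and the cell-by-cell structure of the thick realization. The microfibration verification is conceptually straightforward once one observes that the disjointness condition defining $\tilde{\cW}_p(\lambda)\subset \tilde{S}_{1^{p+1}}(M)\times \tilde{W}_\lambda(M)$ is open.
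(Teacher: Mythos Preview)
Your overall strategy is the same as the paper's: verify that the augmentation is a microfibration with weakly contractible fibers and invoke Weiss's lemma. Step~1 matches the paper essentially verbatim.

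In Step~2 you have misread the definition of $\tilde{\cW}_p(\lambda)$. The space of $p$-simplices is a \emph{disjoint union} indexed by the underlying \emph{set} of $\tilde S_{1^{p+1}}(M)$; the tuple $(m_0,\ldots,m_p)$ carries the discrete topology, not the manifold topology. Consequently the strict fiber over $\xi$ is already (homeomorphic to) the thick realization of the discrete flag set $X_\bullet$ you wrote down---there is no topological semisimplicial space $p\mapsto \tilde S_{1^{p+1}}(N)$ in sight, and no comparison map to analyze. The paper simply observes this and applies Lemma~\ref{lemflagcontr} directly, using that $M\setminus|\xi|$ is infinite. The ``main obstacle'' you flag therefore does not exist.

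This matters because your sketched workaround is not sound as stated: if the fiber really were the thick realization of the topologized semisimplicial space, a compact image of $S^k$ would \emph{not} meet only finitely many sets of the form $\{\sigma\}\times\Delta^p$ (already at level $p=0$ the $0$-skeleton is $N$ with its manifold topology, and a nonconstant path hits uncountably many points). So the detour is both unnecessary and, in the form written, incorrect. Once you correct the identification of the fiber, your proof collapses to exactly the paper's argument.
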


\begin{proof}

We will prove that this map is a microfibration with contractible fibers. To see that it is a microfibration, suppose we have a map $f: D^n \times [0,1] \to \tilde{W}_{ \lambda}(M)$ and a lift $\hat{f}$ to $||\tilde{\cW}_\bullet(\lambda)||$ defined on $D^n \times \{0\}$. Given a point $y \in \tilde{W}_{ \lambda}(M)$, the extra data needed to lift $y$ to $||\tilde{\cW}_\bullet(\lambda)||$ is a non-negative integer $p$, a simplicial coordinate $t \in \mr{int}(\Delta^p)$ and a configuration $(m_0,\ldots,m_p) \in \tilde S_{1^{p+1}}(M)$ such that $m_0,\ldots,m_p$ are disjoint from the particles in the configuration $y$. 

Note that if $y'$ is sufficiently close to $y$, the points $m_0,\ldots,m_p$ will also be disjoint from $y'$. Therefore, the data used to lift $y$ will also define a lift of $y'$. Using this idea, we attempt to lift $\hat{f}$ to $(x,s) \in D^n \times [0,1]$ with $s>0$ by setting $\hat{f}(x,s)$ to be the element $f(x,s)$ of $\tilde{W}_{ \lambda}(M)$ together with the simplicial coordinate and configuration associated to $\hat f(x,0)$. For each $x \in D^n$ this is well-defined for $s \leq \epsilon_x$ for some $\epsilon_x > 0$ and by compactness of $D^n$, we can find a single choice of $\epsilon>0$ such that this construction defines a lift on all of $ D^n \times [0,\epsilon]$.

Next we will show that the fibers of the augmentation map are weakly contractible. Note that the fiber of the augmentation map over a configuration $y \in \tilde{W}_{ \lambda}(M)$ is homeomorphic to the geometric realization of the following flag set  $F_{\bullet}(y)$. The set of $p$-simplices of $F_{\bullet}(y)$ is the underlying set of $\tilde S_{1^{p+1}}(M \backslash y)$ and the face maps are induced by forgetting the $i$th point. It is clear that $F_{\bullet}(y)$ is a flag set. It satisfies the conditions of Lemma \ref{lemflagcontr} since we can always find a point in $M$ not contained in some fixed finite subset. Therefore the fibers are weakly contractible and so the augmentation is a weak equivalence.
\end{proof}

We now prove that the transfer map induces a homology equivalence in a range for arbitrary connected manifolds of dimension at least $2$.

\begin{theorem}Let $M$ be any connected manifold of dimension at least $2$. The transfer map $\tau: H_i(W_{1^{j+1} \lambda}(M);\bQ) \to H_i(W_{1^{j} \lambda}(M);\bQ)$ is an isomorphism for $i \leq f_{M,k}(j)$, where $f_{M,k}$ is the function given in Equation \ref{eqnfmlambda}.\end{theorem}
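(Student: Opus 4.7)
The plan is to bootstrap from the open case (Lemma \ref{lemtransferinverse}) to the closed case via the semisimplicial resolution constructed above. Applying $\fS_{j+k+1}$-coinvariants to the weak equivalence $||\tilde{\cW}_\bullet(1^{j+1}\lambda)|| \to \tilde W_{1^{j+1}\lambda}(M)$ gives a rational homology equivalence $||\cW_\bullet(1^{j+1}\lambda)|| \to W_{1^{j+1}\lambda}(M)$, where $\cW_p := \tilde{\cW}_p/\fS_{j+k+1}$ and we use that rational coinvariants under a finite group action are exact; similarly for $1^j\lambda$. The skeletal filtration produces first-quadrant spectral sequences
\[E^1_{p,q} = H_q(\cW_p(1^{j+1}\lambda);\bQ) \Rightarrow H_{p+q}(W_{1^{j+1}\lambda}(M);\bQ),\]
and analogously for $1^j\lambda$, with $d^1$ the alternating sum of the maps induced by the open inclusions $M \setminus \{m_0,\ldots,m_p\} \hookrightarrow M \setminus \{m_0,\ldots,\hat m_i,\ldots,m_p\}$.

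Next I would verify that the transfer descends to a morphism of these spectral sequences. Since the transfer of Definition \ref{deftransfer} is built from the $\fS$-coinvariant transfer together with the deletion maps $\mr{del}_{i,j}$, both of which are natural in open inclusions of the ambient manifold, the level-$p$ transfer commutes with every face map and assembles into a morphism of spectral sequences whose $E^\infty$-page is the associated graded of $\tau_*: H_*(W_{1^{j+1}\lambda}(M);\bQ) \to H_*(W_{1^j\lambda}(M);\bQ)$.

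To control the level-$p$ transfer, I would use that $\cW_p(1^{j+1}\lambda)$ fibers over $\tilde S_{1^{p+1}}(M)$ with fiber $W_{1^{j+1}\lambda}(M')$, where $M' := M \setminus \{m_0,\ldots,m_p\}$ is the interior of a manifold with non-empty boundary. The transfer acts fiberwise and, by Lemma \ref{lemtransferinverse}, is a rational isomorphism on fiber homology in the range $* \leq f_{M',k}(j)$. A Leray-Serre comparison then shows that the level-$p$ transfer induces an isomorphism on $H_q(\cW_p(1^{j+1}\lambda);\bQ) \to H_q(\cW_p(1^j\lambda);\bQ)$ for $q \leq f_{M',k}(j)$. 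The key bookkeeping point is that $f_{M',k}(j) \geq f_{M,k}(j)$ in every case of the definition of $f$: puncturing preserves dimension and (non-)orientability in dimension $\geq 2$, and it preserves condition $(*)_a$ by Mayer-Vietoris, exactly as observed in the proof of Lemma \ref{lemOpenstratastab}. Combining this with a spectral sequence comparison in the style of Lemma \ref{lemSSbound}, the isomorphism on $E^1_{p,q}$ for $p+q \leq f_{M,k}(j)$ (which holds since $p \geq 0$ forces $q \leq f_{M,k}(j)$) passes to $E^\infty$ and hence to the claimed isomorphism on the abutment in degrees $\leq f_{M,k}(j)$.

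The main obstacle is making the fiberwise step rigorous: one must verify that $\cW_p(1^{j+1}\lambda) \to \tilde S_{1^{p+1}}(M)$ is a genuine fiber bundle (which follows from local triviality via ambient isotopies of $M$ moving the basepoints) and that the transfer is compatible with the monodromy on fiber homology (which follows from naturality of the transfer under diffeomorphisms of the ambient manifold). If a Leray-Serre argument is inconvenient, an alternative is to trivialize the bundle over a contractible open cover of $\tilde S_{1^{p+1}}(M)$ and patch the isomorphism ranges via Mayer-Vietoris, applying Lemma \ref{lemtransferinverse} directly on each patch.
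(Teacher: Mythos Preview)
Your proposal is correct and follows essentially the same route as the paper: pass to chains on the semisimplicial resolution, take $\fS$-coinvariants, compare the resulting skeletal spectral sequences via the levelwise transfer, invoke Lemma \ref{lemtransferinverse} on each level since the punctured manifolds are open, and note that $f_{M',k}(j)=f_{M,k}(j)$ under puncturing. The only difference is presentational: the paper works directly with the semisimplicial chain complex and asserts the level-$p$ isomorphism immediately from Lemma \ref{lemtransferinverse}, whereas you insert an auxiliary Leray--Serre comparison over the base $\tilde S_{1^{p+1}}(M)$ to reach the same conclusion about the $E^1$-page.
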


\begin{proof} Note that the transfer map is not a map of spaces so we will be forced to work in the category of chain complexes. Applying rational singular chains to the resolution and using that geometric realization commutes with singular chains up to quasi-isomorphism gives us a semisimplicial chain complex $C_*(\tilde{\cW}_\bullet(1^j \lambda);\bQ)$ such that the augmentation $||C_*(\tilde{\cW}_\bullet(1^j \lambda);\bQ)|| \to C_*(\tilde{W}_{1^j \lambda}(M);\bQ)$ is a quasi-isomorphism.  Here $||\cdot|| $ is defined by taking a semisimplicial chain complex, forming a double complex using the alternating sum of face maps to construct a second differential and then taking the total complex. Applying $\fS_{k+j}$-coinvariants levelwise we get a semisimplicial chain complex with augmentation to a chain complex with homology $H_*(W_{1^j \lambda}(M);\bQ)$ and level $p$ having homology given by
\[\bigoplus_{\{m_0,\ldots,m_p\} \in \tilde S_{1^{p+1}}(M)} H_*(W_{1^j \lambda}(M \backslash \{m_0,\ldots,m_p\});\bQ)\]

\noindent  Note that we have used that, in characteristic zero, taking coinvariants by a finite group action is exact. Applying the construction of the transfer map levelwise to the augmented semisimplicial chain complex gives us a semisimplicial chain map $(\tau_\bullet)_*: C_*(\cW_\bullet(1^{j+1} \lambda);\bQ) \to C_*(\cW_\bullet(1^j \lambda);\bQ)$ inducing the transfer map on homology levelwise. Recall that there is a spectral sequence converging to the homology of the geometric realization of a semisimplicial chain complex with $E^1$-page the homology of the levels. If $A_\bullet$ is a semisimplicial chain complex, this spectral sequence has $E^1_{pq}=H_q(A_p)$. The map $(\tau_\bullet)_*$ induces a map between the spectral sequence for $C_*(\cW_\bullet(1^{j+1} \lambda);\bQ) $ and $C_*(\cW_\bullet(1^j \lambda);\bQ)$. Since the $M \backslash \{m_0,\ldots,m_p\}$ are connected manifolds that are the interior of a manifold with non-empty boundary, Lemma \ref{lemtransferinverse} implies that $(\tau_\bullet)_*$ induces an isomorphism on $E^1_{p,q}$ for $q \leq f_{N_p,k}(j)$ with $N_p$ equal to $M$ with $p$ points removed. Observe that $ f_{M,k}(j) =  f_{N_p,k}(j) $ for all $p$ (this follows from  using Mayer-Vietoris to see that condition $(*)_a$ is preserved by removing points). By a spectral sequence comparison theorem, the transfer map $\tau: H_*(W_{1^{j+1} \lambda}(M);\bQ) \to H_*(W_{1^j \lambda}(M);\bQ)$ is an isomorphism in the range $* \leq  f_{M,k}(j)$.\end{proof}

\begin{remark} \label{hodgeremark}
Since the maps which permute the ordering, forget the ordering and forget a particle are algebraic maps, both $(\mr{del}_{i,j})_*$ and $\iota$ preserve mixed Hodge structures and hence so does the transfer map $\tau: H_i(W_{1^{j+1} \lambda}(M);\bC) \to H_i(W_{1^{j} \lambda}(M);\bC)$. Therefore, if $M$ is algebraic, in the stable range the transfer map induces an isomorphism of mixed Hodge structures.
\end{remark}

\bibliographystyle{amsalpha}
\bibliography{SymComp}

\end{document}